\newtheorem{theorem}{Theorem}[section] 
\newtheorem{lemma}[theorem]{Lemma}     
\newtheorem{corollary}[theorem]{Corollary}
\newtheorem{proposition}[theorem]{Proposition}
\theoremstyle{definition}
\newtheorem{definition}{Definition}
\DeclareMathOperator{\id}{id}
\providecommand{\setC}{\mathbb{C}}
\providecommand{\setR}{\mathbb{R}}
\providecommand{\setS}{\mathbb{S}}
\providecommand{\setU}{\mathbb{U}}
\providecommand{\setZ}{\mathbb{Z}}
\providecommand{\tensor}{\otimes}
\providecommand{\eps}{\varepsilon}
\providecommand{\incc}[1]{\left[{#1 }\right]}
\providecommand{\calA}{\mathcal{A}}
\providecommand{\calI}{\mathcal{I}}
\providecommand{\calO}{\mathcal{O}}
\providecommand{\conj}[1]{\overline{#1 }}
\providecommand{\quot}[2]{{#1}/{#2}}
\providecommand{\slra}[1]{\stackrel{#1}{\longrightarrow}}
\providecommand{\sfoot}[1]{\mbox{\begin{footnotesize}$#1$\end{footnotesize}}}
\providecommand{\stiny}[1]{\mbox{\begin{tiny}$#1$\end{tiny}}}
\providecommand{\trace}[1]{\mbox{Tr}\,{#1}}
\title[Equivariant Homotopy Invariants]{Axiomatic Description of Lefschetz Type Equivariant Homotopy Invariants}
\author{Philipp Wruck}
\begin{document}

\begin{abstract}
We describe generators of universal Lefschetz groups consisting of self-maps of equivariant 1-spheres. This allows to formulate a normalization axiom which, together with the usual axioms,
determines an equivariant Lefschetz number uniquely. We apply this technique to identify the fixed orbit detecting equivariant Lefschetz number of Chorny with the fixed orbit 
index of Dzedzej.
\end{abstract}

\maketitle

\section{Introduction}
 The Lefschetz number is a homotopy invariant of a self-map $f:X\to X$ of a finite CW complex intimately related to the fixed points of the map $f$. Arkowitz and Brown have shown in 
 \cite{arkowitz} that the Lefschetz number is uniquely characterized by four axioms, which are quite natural to postulate.

 Things are a bit trickier in the equivariant world. Laitinen and L\"{u}ck \cite{laitinen} have constructed a universal Lefschetz group and a universal Lefschetz number, where 
 universality means that every equivariant homotopy invariant with certain properties is the homomorphic image of the universal invariant. There are still several useful instances of 
 equivariant Lefschetz numbers, and it is desirable to be able to tell when two such invariants are equal, i.e. to determine the homomorphism which specifies them. 
 
 In this paper we are going to give a characterization of the universal Lefschetz group for any topological group $G$ in terms of generators. The generators consist of equivariant 
 homotopy classes of self maps of finite wedge sums of equivariant $1$-spheres. By introducing a linearity axiom in the definition of an equivariant Lefschetz number, we obtain a 
 universal linear Lefschetz group and a universal linear equivariant Lefschetz number. This is the Lefschetz invariant of Laitinen and L\"{u}ck. For the universal linear Lefschetz group, 
 we obtain a simpler set of generators, which turns out to be a basis under mild hypotheses on $G$. This last result was already imminent in \cite{laitinen}. 

 Formulated in the Axiomatic language in the spirit of Arkowitz and Brown, the characterization implies that any equivariant Lefschetz number is uniquely determined by its values on self 
 maps of finite wedge sums of equivariant 1-spheres. This extends a result of Gon\c{c}alves and Weber \cite{goncalves}, who have identified yet another equivariant Lefschetz number of 
 L\"{u}ck and Rosenberg \cite{rosenberg} for cocompact actions as the unique equivariant Lefschetz invariant having as values on finite wedge sums of equivariant 1-spheres certain linear 
 combinations of incidence numbers. 

 In Section 2, we develop the universal Lefschetz group and the universal linear Lefschetz group, construct the universal invariants, and describe generators and bases of the universal 
 groups. The main result is Theorem \ref{thm:lefschetzbasis}. In Corollary \ref{cor:lefschetzaxioms}, we reformulate the result as a set of axioms for equivariant Lefschetz numbers.

 In Section 3, we develop the theory of equivariant cellular homology to be able to define an equivariant Lefschetz number later. This chapter is based on \cite{ynrohc} and \cite{willson}
 and nothing new is proven here. At some points the representation may be a bit more simple than in the references, but on the other hand, we have also skipped some of the proofs.

 Section 4 reviews the theory of the Hattori-Stallings trace of endomorphisms of finitely generated projective modules and proves some of the more unknown properties of this map.
 
 In Section 5 and Section 6, we use the axioms to compare two constructions of an equivariant Lefschetz number which, in contrast to L\"{u}ck and Rosenbergs invariant, detects fixed orbits
 of $G$-maps rather than fixed points. These constructions are a homological construction, extracted from \cite{ynrohc}, and an analytical construction done by Dzedzej in \cite{dzedzej}. 
 It turns out that these two invariants are equal whenever both are defined. In many instances, the analytical Lefschetz number is much easier to calculate than the homological one.

 We close in Section 7 with the obligatory Lefschetz fixed orbit theorem and an example that shows that the formulation is, in some sense, optimal.

\section{The Universal Lefschetz Number}
 In the following we will work in the category of pointed $G$-CW complexes, where $G$ is any topological group. $G$ will be more specific later on. A quick definition of the notion of 
 $G$-CW complexes is given at the beginning of Section \ref{section:cellhom}. We will not refer to the fact that complexes and maps are pointed. So by a $G$-map $f:X\to X$ of a $G$-CW 
 complex, we really mean a pointed $G$-map of a pointed $G$-CW complex with $G$-fixed base point. Similarly, equivariant homotopies are required to fix base points.

\begin{definition}
 An equivariant Lefschetz number is a pair $(\ell_G, B)$, where $B$ is an abelian group and $\ell_G$ is an assignment, assigning to any equivariant self map $f$ of a finite $G$-CW 
 complex $X$ an element $\ell_G(f)$ of $B$. This is required to fulfill:
\begin{enumerate}
 \item (Homotopy Invariance): If $k:X\to Y$ is a $G$-homotopy equivalence of finite $G$-CW complexes, $f:X\to X$, $h:Y\to Y$ and the diagram
 \[
  \xymatrix{
           X\ar[r]^f\ar[d]^k&X\ar[d]^k\\
           Y\ar[r]^h&Y
           }
 \]
 is $G$-homotopy commutative, then $\ell_G(f)=\ell_G(h)$.

 \item (Cofibration): If $A\subseteq X$ is an invariant subcomplex and there is a diagram
  \[
   \xymatrix{
             A\ar[r]\ar[d]_{f|_A}&X\ar[d]^f\ar[r]&\quot XA\ar[d]^{\hat{f}}\\
             A\ar[r]&X\ar[r]&\quot XA,
            }
  \]
 then
 \[
  \ell_G(f)=\ell_G(f\big|_A)+\ell_G(\hat{f}).
 \]

 \item (Commutativity): If $X, Y$ are $G$-CW complexes and $f:X\to Y$, $h:Y\to X$ are $G$-maps, then
 \[
  \ell_G(h\circ f)=\ell_G(f\circ h).
 \]
\end{enumerate}
\end{definition}
   
A universal equivariant Lefschetz number is a Lefschetz number $(L_G, U)$ such that, whenever $(\ell_G, B)$ is an equivariant Lefschetz number, there is a unique homomorphism 
$\varphi:U\to B$ such that $\ell_G(f)=\varphi(L_G(f))$ for any self map $f:X\to X$ of a finite $G$-CW complex.

The following purely formal result shows that a universal equivariant Lefschetz number exists.

\begin{proposition}\label{prop:existslefschetz}
 There exists a universal Lefschetz number $(L_G, \setU_GL)$.
\end{proposition}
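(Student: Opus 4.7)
The plan is to carry out the standard universal-object construction: form the free abelian group on all possible inputs, quotient by the minimal relations needed to enforce the three axioms, and check the universal property formally.

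First, to avoid set-theoretic worries, I would note that every finite $G$-CW complex is $G$-homotopy equivalent to one built on an underlying set drawn from a fixed universe (say, a set of cells indexed by $\setN$ with orbit types among a set of models $G/H$), so the collection of pairs $(X,f)$ with $X$ a finite $G$-CW complex and $f:X\to X$ a pointed $G$-self-map has a set of representatives up to the equivalence that will be imposed anyway. Let $F$ denote the free abelian group on this set of pairs, writing $[f]$ for the generator corresponding to $f:X\to X$. Let $R\subseteq F$ be the subgroup generated by all elements of the three forms
\[
 [f]-[h],\qquad [f]-[f|_A]-[\hat f],\qquad [h\circ f]-[f\circ h],
\]
one for each instance of the three axioms (homotopy invariant square with a $G$-homotopy equivalence, invariant subcomplex cofibration, and composable pair). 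Set $\setU_GL:=F/R$ and define $L_G(f):=[f]+R$.

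Next I would verify that $(L_G,\setU_GL)$ is an equivariant Lefschetz number: each axiom holds in $\setU_GL$ by construction, because the corresponding generators of $R$ have been killed. For the universal property, suppose $(\ell_G,B)$ is any equivariant Lefschetz number. The assignment $[f]\mapsto \ell_G(f)$ defines a homomorphism $\tilde\varphi:F\to B$ because $F$ is free. The three axioms for $\ell_G$ say exactly that $\tilde\varphi$ vanishes on each of the three families of generators of $R$, hence $\tilde\varphi(R)=0$, so $\tilde\varphi$ descends to a homomorphism $\varphi:\setU_GL\to B$ with $\varphi(L_G(f))=\ell_G(f)$. Uniqueness of $\varphi$ is immediate since the $L_G(f)$ generate $\setU_GL$.

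The only genuinely delicate point is the foundational one flagged above: one must make sure that $F$ is a set, not a proper class, so that $\setU_GL$ is a bona fide abelian group. I would handle this by fixing, once and for all, a set of representatives of finite pointed $G$-CW complexes (indexed by their finite cell data and attaching maps valued in that same indexed class), and taking $F$ to be free on the set of $G$-maps between such representatives. The homotopy invariance axiom then ensures that the resulting $\setU_GL$ does not depend, up to canonical isomorphism, on these choices, because any other finite $G$-CW complex is linked to a representative by a $G$-homotopy equivalence fitting into a square of the kind considered in axiom (1). Beyond this bookkeeping the argument is entirely formal, so I expect no further obstacle.
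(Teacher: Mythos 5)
Your proposal is correct and follows essentially the same route as the paper: form a free abelian group on pairs $(f,X)$ and quotient by relations enforcing the three axioms, then check the universal property formally. The only cosmetic difference is that the paper imposes homotopy invariance by passing to equivalence classes of pairs before taking the free abelian group, whereas you impose it via additional generators $[f]-[h]$ of the relation subgroup; the resulting quotients coincide, and your extra care about the set-theoretic bookkeeping is a harmless refinement of what the paper leaves implicit.
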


\begin{proof}
 Consider the set of pairs $[f, X]$ of equivalence classes of pairs $(f, X)$, where $(f, X)$ is equivalent to $(h, Y)$, if there is a $G$-homotopy equivalence $k:X\to Y$, making the 
 diagram
 \[
  \xymatrix{
           X\ar[r]^f\ar[d]^k&X\ar[d]^k\\
           Y\ar[r]^h&Y
           }
 \]
 $G$-homotopy commutative. Let $U$ be the free abelian group on these equivalence classes and $S$ be the subgroup generated by elements of one of the following forms.
 \begin{enumerate}
  \item 
  \[
  [f, X]-[f\big|_A, A]-[\hat{f}, \quot XA],
  \]
  where $A\subseteq X$ is an invariant subcomplex, $f(A)\subseteq A$ and $\hat{f}$ is the map induced in the quotient.

  \item
  \[
   [f\circ h, X]-[h\circ f, Y],
  \]
  where $h:X\to Y$ and $f:Y\to X$ are $G$-maps. 
 \end{enumerate}
 Denote the quotient group $\quot US$ by $\setU_GL$. For a self-map $f:X\to X$, let $L_G(f)\in\setU_GL$ be the class of $[f, X]$. We claim that this is a universal Lefschetz invariant. 
 Indeed, if $(\ell_G, B)$ is any Lefschetz invariant, we can define a map $\varphi:\setU_GL\to B$ by $[f, X]\mapsto\ell_G(f)$ and extend linearly. This is well-defined because $\ell_G$ is
 an equivariant Lefschetz number and obviously, $\varphi$ is a homomorphism with $\varphi(L_G(f))=\ell_G(f)$.
\end{proof}

Before we describe the universal Lefschetz group more explicitly, we derive some properties of equivariant Lefschetz numbers in general.

\begin{proposition}\label{prop:lefschetzproperties}
 Let $(\ell_G, B)$ be an equivariant Lefschetz number. Then the following holds.
  \begin{enumerate}[(i)]
   \item For the one point space $*$ and the unique map $*:*\to *$, $\ell_G(*)=0$.

   \item If $f:X\to X$ is equivariantly homotopic to the constant map, then $\ell_G(f)=0$.

   \item For a $G$-CW complex $X$, its ordinary suspension $\Sigma X=X\wedge\setS^1$ and $f:X\to X$,
   \[
    \ell_G(f)=-\ell_G(\Sigma f).
   \]
  \end{enumerate}
\end{proposition}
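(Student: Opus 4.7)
The plan is to prove (i), (ii), and (iii) in that order, each a direct application of one or two of the stated axioms, with the suspension formula in (iii) being the only step that requires a small construction.

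For (i), I would apply the cofibration axiom to the degenerate pair $A = X = *$: then $\quot{X}{A}$ is again a one-point space and all three maps in the diagram are the unique self-map of $*$, so the axiom yields $\ell_G(*) = \ell_G(*) + \ell_G(*)$, forcing $\ell_G(*) = 0$. For (ii), I would use homotopy invariance (with $Y = X$ and $k = \id_X$) to reduce to the constant map $c: X \to X$ at the basepoint. Factoring $c$ as $X \to * \to X$ (projection followed by basepoint inclusion), the commutativity axiom identifies $\ell_G(c)$ with the Lefschetz number of the opposite composition, which is the unique self-map of $*$; this vanishes by (i).

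For (iii), I would use the reduced cone $CX = X \wedge I$ with $I = \incc{0, 1}$ carrying the trivial $G$-action, so that $X \hookrightarrow CX$ is a $G$-invariant cofibration with quotient $\Sigma X$. Setting $Cf := f \wedge \id_I$ extends $f$ to $CX$, satisfies $Cf|_X = f$, and induces $\Sigma f$ on $\quot{CX}{X}$. The linear deformation $([x, t], s) \mapsto [x, t + s(1 - t)]$ is $G$-equivariant because $G$ acts trivially in the $I$-coordinate, so $CX$ is equivariantly contractible and $Cf$ is therefore $G$-homotopic to a constant map. Part (ii) then gives $\ell_G(Cf) = 0$, and the cofibration axiom for $X \subseteq CX$ yields $\ell_G(f) + \ell_G(\Sigma f) = 0$.

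The only genuine obstacle lies in (iii): one needs the cone, together with its deformation retraction onto the basepoint, to be available in the equivariant category. Placing the trivial $G$-action on the interval factor of $X \wedge I$ handles this cleanly, after which the whole proposition reduces to a direct bookkeeping application of the three axioms.
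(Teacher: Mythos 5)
Your proposal is correct and follows essentially the same route as the paper: cofibration on the degenerate pair for (i), factoring the constant map through the point plus commutativity for (ii), and the cone sequence $X \subseteq CX$ with $\quot{CX}{X} \cong \Sigma X$ and equivariant contractibility of $CX$ for (iii). The extra care you take in checking that the cone's contraction is $G$-equivariant is a reasonable elaboration of a step the paper leaves implicit.
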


\begin{proof}
 \begin{enumerate}[(i)]
  \item Clearly for the sequence $*\slra{*}*\slra{*}*$, the cofibration property of the equivariant Lefschetz number implies
  \[
   \ell_G(*)=\ell_G(*)+\ell_G(*)
  \]
  and consequently $\ell_G(*)=0$.

  \item Let $c:X\to X$ be the constant map. $c$ factors as $c=i\circ p$, where $p:X\to *$ is the projection, $i:*\to X$ the inclusion of the base point. By the commutativity property and 
  (i),
  \[
   \ell_G(c)=\ell_G(i\circ p)=\ell_G(p\circ i)=\ell_G(*)=0.
  \]
 
  \item Let $CX$ be the reduced cone over $X$, then $\Sigma X\cong\quot{CX}X$ and the map $Cf:CX\to CX$ preserves $X\subseteq CX$. Furthermore, the map $Cf$ induces on the quotient is 
  just $\Sigma f$. Hence we obtain
  \[
   \ell_G(Cf)=\ell_G(f)+\ell_G(\Sigma f).
  \]
  Since $CX$ is $G$-contractible, $Cf$ is equivariantly homotopic to the constant map and therefore $\ell_G(Cf)=0$, which yields the result.
 \end{enumerate}
\end{proof}

The next lemma is crucial for several inductive proofs on the dimension of CW complexes.

\begin{lemma}\label{lem:spheresuspension}
 Let $X=\bigvee{\quot G{H_i}}_+\wedge\setS^{n-1}$ be a finite wedge of equivariant $(n-1)$-spheres, $f:X\to X$ a $G$-map. Then $f$ is equivariantly homotopic to the $(n-2)$-nd ordinary 
 suspension of a $G$-map of the complex
\[
 Y=\bigvee{\quot G{H_i}}_+\wedge\setS^1,
\]
provided $n\geq2$.
\end{lemma}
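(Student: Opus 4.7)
The plan is to observe first that $X=\Sigma^{n-2}Y$ as pointed $G$-spaces, since $\setS^{n-1}=\Sigma^{n-2}\setS^1$ carries the trivial $G$-action and ordinary suspension commutes with wedge sums. Hence it is enough to show that the suspension map $\Sigma^{n-2}\colon [Y,Y]^G\to [X,X]^G$ on equivariant homotopy classes is surjective.

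I would then split these homotopy sets by orbit type. By the universal property of the wedge together with the adjunction $(\quot{G}{H_i})_+\wedge(-)\dashv(-)^{H_i}$ (and the triviality of the $H_i$-action on the sphere coordinates), for any pointed $G$-CW complex $Z$ one has
\[
[(\quot{G}{H_i})_+\wedge\setS^m,Z]^G \;=\; [\setS^m,Z^{H_i}] \;=\; \pi_m(Z^{H_i}),
\]
so
\[
[X,X]^G=\prod_i\pi_{n-1}(X^{H_i}),\qquad [Y,Y]^G=\prod_i\pi_1(Y^{H_i}).
\]
Since the suspension coordinates have trivial $G$-action, $X^{H_i}=\Sigma^{n-2}(Y^{H_i})$, and the equivariant suspension corresponds factor by factor to the ordinary iterated suspension $\pi_1(Y^{H_i})\to\pi_{n-1}(\Sigma^{n-2}Y^{H_i})$.

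The case $n=2$ is trivial, as $X=Y$. For $n\geq 3$, the space
\[
Y^{H_i}=\bigvee_j\bigl((\quot{G}{H_j})^{H_i}\bigr)_+\wedge\setS^1
\]
is a wedge of reduced suspensions of nonempty pointed spaces, hence path-connected. Therefore $X^{H_i}=\Sigma^{n-2}Y^{H_i}$ is $(n-2)$-connected; by Hurewicz $\pi_{n-1}(X^{H_i})\cong H_{n-1}(X^{H_i})$, and the iterated suspension isomorphism in reduced homology identifies this with $H_1(Y^{H_i})$. Under these identifications the iterated suspension on homotopy groups becomes the Hurewicz map $\pi_1(Y^{H_i})\to H_1(Y^{H_i})$, i.e.\ abelianization, which is surjective. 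Lifting the $\pi_{n-1}$-component corresponding to each $H_i$ produces a $G$-map $g\colon Y\to Y$ with $f\simeq_G\Sigma^{n-2}g$.

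The main obstacle is bookkeeping: verifying that after unwinding the wedge and orbit-type decompositions together with the adjunction, the iterated equivariant suspension on $[Y,Y]^G\to[X,X]^G$ really does coincide, factor by factor, with the classical Hurewicz homomorphism on the fixed-point subspaces. Once that identification is in place, surjectivity is immediate from the surjectivity of abelianization of the free groups $\pi_1(Y^{H_i})$.
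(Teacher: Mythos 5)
Your argument is correct, and its skeleton coincides with the paper's: both reduce, via the adjunction $[({\quot G{H_i}})_+\wedge\setS^m,Z]_G\cong[\setS^m,Z^{H_i}]$ and the fact that fixed points commute with (trivial-coordinate) suspension, to showing that the ordinary iterated suspension $\pi_1(Y^{H_i})\to\pi_{n-1}(\Sigma^{n-2}Y^{H_i})$ is surjective, and both rest on the same connectivity computation (each $Y^{H_i}$ is a wedge of reduced suspensions of the nonempty spaces $(({\quot G{H_j}})^{H_i})_+$, hence path-connected, so $X^{H_i}$ is $(n-2)$-connected). Where you genuinely diverge is the final step: the paper suspends one dimension at a time and quotes the Freudenthal suspension theorem at the edge of its range ($r=n-1\leq 2(n-2)+1$), whereas you perform all $n-2$ suspensions at once and deduce surjectivity from the compatibility $h_{n-1}\circ\Sigma^{n-2}=\sigma^{n-2}\circ h_1$ of the Hurewicz map with the homology suspension isomorphism, reducing everything to the surjectivity of abelianization $\pi_1(Y^{H_i})\to H_1(Y^{H_i})$. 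Your route is slightly more self-contained, since in this borderline dimension the surjectivity half of Freudenthal is exactly the Hurewicz argument you give; the paper's route is shorter to state and would also yield the injectivity statement in higher ranges if it were needed. Two cosmetic remarks: freeness of $\pi_1(Y^{H_i})$ is irrelevant (surjectivity of the Hurewicz map onto $H_1$ holds for any path-connected space), and the "bookkeeping" you flag—that the equivariant suspension corresponds factorwise to the ordinary one under the adjunction—is precisely the identification the paper also uses without further comment, so there is no hidden gap there.
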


\begin{proof}
 We want to show that the suspension homomorphism
 \[
  \Sigma:[{\quot GK}_+\wedge\setS^{n-1}, X]_G\to[{\quot GK}_+\wedge\setS^n,\Sigma X]_G
 \]
 is an epimorphism under the condition that $X$ is a wedge sum of equivariant $(n-1)$-spheres. We have the usual adjunction isomorphism
 \[
  [{\quot GK}_+\wedge\setS^{n-1}, X]_G\cong[\setS^{n-1}, X^K].
 \]
 Hence, it suffices to show that 
 \[
  \Sigma:[\setS^{n-1}, X^K]\to[\setS^n,\Sigma X^K]
 \]
 is an epimorphism. By the Freudenthal suspension theorem, if $X^K$ is $m$-connected, then
\[
 \Sigma:\pi_r(X^K)\to\pi_{r+1}(\Sigma X^K)
\]
 is an epimorphism for $r\leq 2m+1$. We have
 \begin{eqnarray*}
  X^K&=&\{([g], x)\;|\;([kg], x)=([g], x)\;\forall\,k\in K\}\cup\{*\}\\
     &=&\{([g], x)\;|\;([g], x)\in\quot G{H_i}\times\setS^{n-1},\;g^{-1}Kg\subseteq H_i\}\cup\{*\}\\
     &=&\left(\bigvee_{i\in I}N(K, H_i)_+\right)\wedge\setS^{n-1},
 \end{eqnarray*}
 where $N(K, H)=\{g\in G\;|\;g^{-1}Kg\subseteq H\}$. Since $\setS^{n-1}$ is $(n-2)$-connected and $X^K$ is the smash product of $\setS^{n-1}$ with a $-1$-connected space,
 $X^K$ is itself $(n-2)$-connected. So the Freudenthal theorem yields that suspension is an epimorphism, provided $r\leq 2(n-2)+1=2n-3$. Letting $r=n-1$, the condition reduces to 
 $0\leq n-2$, i.e. $n\geq2$.
\end{proof}

In the present generality, equivariant Lefschetz numbers are not well-suited for applications. For example, there is in general no way to give a connection between an equivariant 
Lefschetz number and the fixed points or orbits of a map. Therefore it is customary to assume further axioms. By universality, the resulting invariants will be homomorphic images of the 
universal Lefschetz number. 

Laitinen and L\"{u}ck in \cite{laitinen} have introduced the following Axiom.
\begin{enumerate}
 \item[4a.] (Linearity): If $X$ is a $G$-CW complex and $f, h:\Sigma X\to\Sigma X$, then
 \[
  \ell_G(f+h)=\ell_G(f)+\ell_G(h),
 \]
 where $f+h$ is a representant of the sum of the homotopy classes of $f$ and $h$, coming from the structure of $\Sigma X$ as an $H$-cogroup.
\end{enumerate}

Arkowitz and Brown in \cite{arkowitz} gave an additivity Axiom, though without an acting group $G$. An equivariant version appears in \cite{goncalves}. It is apparent that additivity must 
hold for any Lefschetz number that is related to fixed points or orbits of a map. 

\begin{enumerate}
 \item[4b.] (Wedge of Circles): Whenever $X=\bigvee{\quot G{H_i}}_+\wedge\setS^1$ is a finite wedge of equivariant $1$-spheres and $f:X\to X$ a $G$-map, then
 \[
  \ell_G(f)=\sum\ell_G(f_i),
 \]
 where $f_i=\pi_i\circ f\circ\iota_i$, with $\iota_i:{\quot G{H_i}}_+\wedge\setS^1\to X$ the inclusion of the $i$-th sphere, $\pi_i:X\to{\quot G{H_i}}_+\wedge\setS^1$ the projection onto 
 it.
\end{enumerate}

The wedge of circles axiom ensures that an equivariant Lefschetz number is additive in general.

\begin{proposition}\label{prop:wedgeadditivity}
 Let $(\ell_G, B)$ be an equivariant Lefschetz number satisfying the wedge of circles axiom and let $X=\bigvee X_i$ be a finite wedge of $G$-CW complexes, $f:X\to X$ a $G$-map. Then
 \[
  \ell_G(f)=\sum\ell_G(f_i),
 \]
 where $f_i$ is the obvious adjustment of the map $f_i$ from Axiom 4b.
\end{proposition}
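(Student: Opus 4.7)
The strategy is to first strengthen Axiom 4b from wedges of equivariant $1$-spheres to wedges of equivariant $N$-spheres for every $N\geq 0$, and then bootstrap to arbitrary wedges via the skeletal filtration.

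For the sphere step with $N\geq 1$, Lemma \ref{lem:spheresuspension} writes any self-map $f$ of $\bigvee{\quot G{H_i}}_+\wedge\setS^N$ as $f\simeq\Sigma^{N-1}g$ for some self-map $g$ of $\bigvee{\quot G{H_i}}_+\wedge\setS^1$. Since projections and inclusions commute with suspension, $\pi_i\circ f\circ\iota_i\simeq\Sigma^{N-1}(\pi_i\circ g\circ\iota_i)$. Iterated application of Proposition \ref{prop:lefschetzproperties}(iii) then converts the desired identity for $f$ into the identity $\ell_G(g)=\sum_i\ell_G(\pi_i\circ g\circ\iota_i)$, which is exactly Axiom 4b; the signs $(-1)^{N-1}$ cancel on both sides. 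The case $N=0$ is reduced to $N=1$ by a single suspension, using Proposition \ref{prop:lefschetzproperties}(iii) and the fact that $\pi_i\circ\Sigma f\circ\iota_i=\Sigma(\pi_i\circ f\circ\iota_i)$.

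For the general case $X=\bigvee X_i$, the plan is induction on $\dim X$. Equivariant cellular approximation lets me assume $f$ is cellular; since $X^{(n)}=\bigvee_i X_i^{(n)}$, each $f_i=\pi_i\circ f\circ\iota_i$ is then cellular on $X_i$, and $X^{(N-1)}$ is $f$-invariant. The cofibration axiom applied both to $f$ and to each $f_i$ separately gives
\[
 \ell_G(f)=\ell_G(f|_{X^{(N-1)}})+\ell_G(\hat f),\qquad \ell_G(f_i)=\ell_G(f_i|_{X_i^{(N-1)}})+\ell_G(\hat{f_i}),
\]
and the inductive hypothesis handles the $(N-1)$-skeleton terms. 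The quotient $X/X^{(N-1)}=\bigvee_i X_i/X_i^{(N-1)}$ is a wedge of equivariant $N$-spheres, so the first step yields $\ell_G(\hat f)=\sum_i\ell_G(\tilde\pi_i\circ\hat f\circ\tilde\iota_i)$, with $\tilde\pi_i,\tilde\iota_i$ projection and inclusion in this wedge.

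The main bookkeeping obstacle will be verifying the compatibility $\tilde\pi_i\circ\hat f\circ\tilde\iota_i=\hat{f_i}$, i.e., that the wedge-summand component of $\hat f$ at $X_i/X_i^{(N-1)}$ is the quotient of the component $f_i$. This follows by a diagram chase from the naturality of the quotient construction with respect to $\pi_i$ and $\iota_i$. Once this is in place, summing the two displayed identities closes the induction and produces $\ell_G(f)=\sum_i\ell_G(f_i)$.
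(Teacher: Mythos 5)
Your proposal is correct and follows essentially the same route as the paper's proof: first extend Axiom 4b to wedges of equivariant $N$-spheres via Lemma \ref{lem:spheresuspension} and the sign cancellation from Proposition \ref{prop:lefschetzproperties}(iii), then induct on the dimension using the cofibration axiom applied to the skeletal filtration. The only difference is that you explicitly flag the compatibility $\tilde\pi_i\circ\hat f\circ\tilde\iota_i=\hat{f_i}$, which the paper leaves implicit.
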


\begin{proof}
 We begin by proving that the assertion holds for wedge sums of equivariant $n$-spheres. Hence, let 
 \[
  X=\bigvee{\quot G{H_i}}_+\wedge\setS^n,\;f:X\to X.
 \]
 By Lemma \ref{lem:spheresuspension}, $f$ is equivariantly homotopic to the repeated suspension of a $G$-map $h:Y\to Y$, where
 \[
  Y=\bigvee{\quot G{H_i}}_+\wedge\setS^1.
 \]
 By the wedge of circles Axiom, $\ell_G(h)=\sum\ell_G(h_i)$. By homotopy invariance and the obvious equality $\pi_i\circ\Sigma h\circ\iota_i=\Sigma(\pi_i\circ h\circ\iota_i)$, it follows 
 that
 \begin{eqnarray*}
  \ell_G(f_i)&=&\ell_G(\pi_i\circ f\circ\iota_i)\\
             &=&\ell_G(\pi_i\circ\Sigma^{n-1}h\circ\iota_i)\\
             &=&\ell_G(\Sigma^{n-1}h_i)\\
             &=&(-1)^{n-1}\ell_G(h_i).
 \end{eqnarray*}
Therefore,
\begin{eqnarray*}
 \ell_G(f)&=&\ell_G(\Sigma^{n-1}h)\\
          &=&(-1)^{n-1}\ell_G(h)\\
          &=&(-1)^{n-1}\sum\ell_G(h_i)\\
          &=&(-1)^{n-1}\sum(-1)^{n-1}\ell_G(f_i)\\
          &=&\sum\ell_G(f_i).
\end{eqnarray*}
The general case now follows easily by induction on the dimension of the wedge summands. 

If $X$ is a wedge sum of orbits, the suspension of $X$ is a wedge of equivariant 1-spheres. By the wedge of circles axiom and Proposition \ref{prop:lefschetzproperties} (iii), the claim 
holds for $X$. 

For the induction step, let all wedge summands of $X$ be at most $n$-dimensional. By the cofibration Axiom,
\[
 \ell_G(f)=\ell_G(f_{n-1})+\ell_G(\hat{f}),
\]
where $f_{n-1}$ is the restriction of $f$ to the $n-1$-skeleton, where we may assume that $f$ is cellular, and $\hat{f}$ is the map induced on $\quot{X_n}{X_{n-1}}$. By induction, the 
claim holds for $f_{n-1}$ and it holds for $\hat{f}$ by what we have proven already. Hence, it holds for $f$ itself.
\end{proof}

Next we prove that Axioms 4a and 4b are equivalent.

\begin{proposition}\label{prop:axiomslinwedge}
 An equivariant Lefschetz number satisfies the linearity axiom if and only if it satisfies the wedge of circles Axiom.
\end{proposition}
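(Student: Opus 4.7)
My plan is to prove each implication separately. For $(4a) \Rightarrow (4b)$ I would exploit the H-cogroup structure on a wedge of equivariant $1$-spheres to decompose $f$ as a cogroup sum of its components $f\iota_i\pi_i$. For $(4b) \Rightarrow (4a)$ I would realise the sum $f+h$ as a self-map of $\Sigma X\vee\Sigma X$ and apply wedge additivity, which has already been derived from $4b$ in Proposition \ref{prop:wedgeadditivity}.

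\emph{From $4a$ to $4b$.} Let $X=\bigvee(G/H_i)_+\wedge\setS^1=\Sigma\bigl(\bigvee(G/H_i)_+\bigr)$, so $X$ is an equivariant H-cogroup. The key step is to verify the decomposition $\id_X\simeq\sum_i\iota_i\pi_i$ in $[X,X]_G$, where the sum is the cogroup addition on $X$. Granting this, for any $f:X\to X$ one has $f=f\circ\id_X=\sum_i f\iota_i\pi_i$, since composition on the left distributes over the cogroup sum via the identity $f\nabla=\nabla(f\vee f)$. Applying linearity and then commutativity,
\[
 \ell_G(f)=\sum_i\ell_G(f\iota_i\pi_i)=\sum_i\ell_G(\pi_i f\iota_i)=\sum_i\ell_G(f_i),
\]
which is $4b$.

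\emph{From $4b$ to $4a$.} For $f,h:\Sigma X\to\Sigma X$ I would write $f+h=\nabla(f\vee h)\mu$ with pinch $\mu:\Sigma X\to\Sigma X\vee\Sigma X$ and fold $\nabla$. By commutativity,
\[
 \ell_G(f+h)=\ell_G\bigl(\mu\nabla(f\vee h)\bigr),
\]
which is a self-map of $\Sigma X\vee\Sigma X$. Proposition \ref{prop:wedgeadditivity}, which uses only $4b$ together with the base axioms and which applies to arbitrary finite wedges of $G$-CW complexes, splits this as $\ell_G(F_1)+\ell_G(F_2)$ with $F_i=\pi_i\mu\nabla(f\vee h)\iota_i$. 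Using $(f\vee h)\iota_1=\iota_1 f$, $\nabla\iota_1=\id$, and the counit identity $\pi_1\mu\simeq\id$ gives $F_1\simeq f$, and symmetrically $F_2\simeq h$; homotopy invariance then produces $\ell_G(f+h)=\ell_G(f)+\ell_G(h)$.

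The main obstacle is establishing $\id_X\simeq\sum_i\iota_i\pi_i$ used in the first direction. Non-equivariantly this is classical: the right hand side acts on the $i$-th wedge summand by stretching the suspension parameter into one of the subintervals produced by the iterated pinch while collapsing the other summands, and this stretching can be straightened out by a linear homotopy in the suspension coordinate. The construction is automatically equivariant and based, since it modifies only the suspension parameter and all of $\iota_i$, $\pi_i$, $\mu$, $\nabla$ are equivariant pointed maps. It is also worth noting that the reverse direction crucially relies on the fact that Proposition \ref{prop:wedgeadditivity} applies to wedges of arbitrary $G$-CW complexes, not merely to wedges of spheres.
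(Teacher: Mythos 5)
Your argument is correct, but one half of it takes a genuinely different route from the paper's. For the direction $4b\Rightarrow4a$ you do essentially what the paper does: realise $f+h$ as $\nabla\circ(f\vee h)\circ\mu$, use commutativity to replace it by the self-map $\mu\circ\nabla\circ(f\vee h)$ of $\Sigma X\vee\Sigma X$, split with Proposition \ref{prop:wedgeadditivity}, and identify the two summands with $f$ and $h$ via $(f\vee h)\circ\iota_1=\iota_1\circ f$, $\nabla\circ\iota_1=\id$ and $\pi_1\circ\mu\simeq\id$ (your bookkeeping here is actually more careful than the paper's, whose displayed formulas at this point conflate the pinch and the fold). For $4a\Rightarrow4b$, however, the paper proves the stronger statement that linearity forces additivity over an arbitrary finite wedge $X=A\vee B$: it suspends, writes $\Sigma f\simeq\Sigma(f\circ p_A)+\Sigma(f\circ p_B)$, invokes $\ell_G(f)=-\ell_G(\Sigma f)$, and then needs commutativity, idempotence of the projections and the cofibration axiom to recognise $\ell_G(p_{\Sigma A}\circ\Sigma f\circ p_{\Sigma A})$ as $\ell_G(\Sigma f_A)$. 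You instead exploit that a wedge of equivariant $1$-spheres is itself a suspension, decompose $\id_X\simeq\sum_i\iota_i\pi_i$ in the cogroup $[X,X]_G$, and finish with left-distributivity of composition over the cogroup sum, linearity and commutativity alone; no suspension and no cofibration axiom are needed in this direction. The price is the lemma $\id_X\simeq\sum_i\iota_i\pi_i$, which your sketch establishes correctly; it also follows formally from the fact that restriction $g\mapsto(g\circ\iota_i)_i$ is a group isomorphism $[\Sigma A,Y]_G\to\prod_i[\Sigma A_i,Y]_G$ for $A=\bigvee A_i$, under which both $\id_X$ and $\sum_i\iota_i\pi_i$ have image $(\iota_i)_i$. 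Since general wedge additivity is in any case recovered from $4b$ by Proposition \ref{prop:wedgeadditivity}, nothing is lost by proving only the literal statement of $4b$, and your version of this implication is the more economical one.
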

\begin{proof}
 We prove that linearity is equivalent to additivity in the sense of Proposition \ref{prop:wedgeadditivity}. By means of that Proposition, the result will follow.
 
 We assume first that $\ell_G$ is linear. By induction, it then suffices to assume that $f:X\to X$ is a $G$-map of a complex consisting of two wedge summands, $X=A\vee B$. 

 Let $p_A:X\to X$ be the projection onto $A$, similarly $p_B$. Then $\Sigma f$ is equivariantly homotopic to $\Sigma(f\circ p_A)+\Sigma(f\circ p_B)$. By Proposition 
 \ref{prop:lefschetzproperties} (iii), commutativity and the facts $p_A^2=p_A$, $\Sigma p_A=p_{\Sigma A}$ and similarly for $B$, we obtain
 \begin{eqnarray*}
  \ell_G(f)&=&-\ell_G(\Sigma f)\\
           &=&-\ell_G(\Sigma(f\circ p_A))-\ell_G(\Sigma(f\circ p_B))\\
           &=&-\ell_G(\Sigma f\circ p_{\Sigma A})-\ell_G(\Sigma f\circ p_{\Sigma B})\\
           &=&-\ell_G(\Sigma f\circ p^2_{\Sigma A})-\ell_G(\Sigma f\circ p^2_{\Sigma B})\\
           &=&-\ell_G(p_{\Sigma A}\circ\Sigma f\circ p_{\Sigma A})-\ell_G(p_{\Sigma B}\circ\Sigma f\circ p_{\Sigma B}).
 \end{eqnarray*}
The map $p_{\Sigma A}\circ\Sigma f\circ p_{\Sigma A}$ leaves $\Sigma_A$ invariant and is the constant map on $\Sigma B$. Cofibration shows that
\[
 \ell_G(p_{\Sigma A}\circ\Sigma f\circ p_{\Sigma A})=\ell_G(\Sigma f_A),
\]
where $f_A:A\to A$ is the map induced by $f$ via inclusion and projection. Of course a similar result holds with $A$ replaced by $B$ and consequently,
\[
 \ell_G(f)=-\ell_G(\Sigma f_A)-\ell_G(\Sigma f_B)=\ell_G(f_A)+\ell_G(f_B).
\]
For the other direction, we assume that $\ell_G$ is additive. Let $f, h:\Sigma X\to\Sigma X$ be two $G$-maps. Let $\gamma:\Sigma X\to\Sigma X\vee\Sigma X$ be the folding map. By 
additivity,
\[
 \ell_G(\gamma\circ(f\vee h))=\ell_G(p_1\circ\gamma\circ f)+\ell_G(p_2\circ\gamma\circ h),
\]
where $p_i:\Sigma X\vee\Sigma X\to\Sigma X$ is the projection to the $i$-th summand, $i=1, 2$. The maps $p_i\circ\gamma$ are equivariantly homotopic to the identity. By commutativity,
we can conclude
\begin{eqnarray*}
 \ell_G(f)+\ell_G(h)&=&\ell_G(\gamma\circ(f\vee h))\\
                    &=&\ell_G((f\vee h)\circ\gamma)\\
                    &=&\ell_G(f+h).
\end{eqnarray*}
\end{proof}

With the additional linearity Axiom, there is a universal linear Lefschetz group $U_GL$ and a universal linear equivariant Lefschetz number satisfying all four axioms. This is easily
derived in the same manner as Proposition \ref{prop:existslefschetz}, with adding the linearity condition as an additional set of relations. By universality, there is a homomorphism 
$\setU_GL\to U_GL$ such that the image of the universal equivariant Lefschetz number is the universal linear equivariant Lefschetz number. By abuse of notation, we will denote the 
universal linear equivariant Lefschetz number by $L_G$ as well. 

Under the assumption that every orbit map $\quot GH\to\quot GH$ is a homeomorphism, Laitinen and L\"{u}ck proved that the universal Lefschetz group for linear equivariant Lefschetz 
numbers is isomorphic to
\[
 \bigoplus_{(H)}\setZ Co(\pi_0(W(H))).
\]
In the sum, $(H)$ runs through the conjugacy classes of closed subgroups of $G$, $Co(K)$ for a group $K$ denotes the set of conjugacy classes of elements of $K$ and $W(H)$ is the Weyl 
group of $H$, that is, the quotient group $\quot{N(H)}H$, where $N(H)$ is the largest subgroup of $G$ such that $H\subseteq N(H)$ is normal. In particular, the result implies that any 
linear equivariant Lefschetz number is uniquely determined by its values on self maps of orbits. Even more precisely, it is determined by its values on the maps
\[
 r_w:{\quot GH}_+\to{\quot GH}_+,\;[g]\mapsto[gw],
\]
where $H$ runs through the conjugacy classes of closed subgroups of $G$ and $w$, for each $(H)$, runs through a complete set of representants of $Co(\pi_0(W(H)))$.

We will reprove this result with slightly different techniques. In particular we will prove first that the universal Lefschetz group $\setU_GL$, without the additional linearity Axiom,
is generated by representants of self-maps of finite wedge sums of equivariant 1-spheres, which already yields a useful normalization Axiom. Assuming linearity in addition,
we show that for any topological group $G$, the basis of L\"{u}ck and Laitinen at least generates the universal linear Lefschetz group, provided one replaces the Weyl group $W(H)$ by the 
space $\left(\quot GH\right)^H$. In most examples these two spaces are homeomorphic as topological monoids, for example if $G$ is compact. When this is the case, we show that the set of 
generators is in fact a basis.

\begin{theorem}\label{thm:lefschetzbasis}
 Let $A\subseteq\setU_GL$ be the set of elements $[f, X]\in\setU_GL$, where $X$ is a finite wedge sum of equivariant $1$-spheres. Let $A'$ be the set of elements 
 $[r_w, {\quot GH}_+]\in U_GL$, with $H$ running through a complete set of representants of conjugacy classes of closed subgroups of $G$ and $r_w$ is right multiplication with $w$, $w$ 
 running through a complete set of representants of elements in $Co(\pi_0(\left(\quot GH\right)^H))$. Here, conjugacy is to be understood via the $W(H)$-action on 
 $\left(\quot GH\right)^H$.

 Then the universal Lefschetz group $\setU_GL$ is generated by $A$ and the universal linear Lefschetz group $U_GL$ is generated by $A'$. If $W(H)\cong\left(\quot GH\right)^H$ for every
 closed subgroup of $G$, then $A'$ is a basis of $U_GL$. More precisely, there are equivariant Lefschetz numbers $\ell_H$ with values in $\setZ Co(\pi_0(W(H)))$ such that
\[
 U_GL\to\bigoplus_{(H)}\setZ Co(\pi_0(W(H))),\;[f, X]\mapsto\sum_{(H)}\ell_H(f)
\]
 is an isomorphism.
\end{theorem}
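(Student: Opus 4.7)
The plan is to run the argument in three phases, mirroring the three assertions.

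\emph{Generation of $\setU_GL$ by $A$.} I would start with an arbitrary class $[f,X]\in\setU_GL$ and use homotopy invariance to replace $f$ by a cellular $G$-map. Iterating the cofibration axiom along the skeletal filtration of $X$ expresses $[f,X]$ as $\sum_k[\hat f_k,X_k/X_{k-1}]$, where each $X_k/X_{k-1}$ is a finite wedge of equivariant $k$-spheres. For $k\geq 1$, Lemma \ref{lem:spheresuspension} (with parameter $n=k+1$) makes $\hat f_k$ equivariantly homotopic to $\Sigma^{k-1}h_k$ for some $G$-map $h_k$ of a wedge of equivariant $1$-spheres, and iterating Proposition \ref{prop:lefschetzproperties}(iii) gives $[\hat f_k,X_k/X_{k-1}]=(-1)^{k-1}[h_k,Y_k]\in A$. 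For $k=0$, Proposition \ref{prop:lefschetzproperties}(iii) applied once trades $[\hat f_0,X_0]$ for $-[\Sigma\hat f_0,\Sigma X_0]$, again in $A$. Summing shows $A$ generates $\setU_GL$.

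\emph{Generation of $U_GL$ by $A'$.} By the first phase it is enough to consider $[f,X]\in U_GL$ with $X=\bigvee{\quot G{H_i}}_+\wedge\setS^1$. Linearity together with Proposition \ref{prop:wedgeadditivity} (via Proposition \ref{prop:axiomslinwedge}) reduces to a single summand $X={\quot GH}_+\wedge\setS^1$. The adjunction and fixed-point calculation from the proof of Lemma \ref{lem:spheresuspension} identifies $[X,X]_G$ with $\pi_1\bigl(\left(\quot GH\right)^H_+\wedge\setS^1\bigr)\cong\bigoplus_{[w]\in\pi_0(\left(\quot GH\right)^H)}\setZ$, the summand indexed by $[w]$ being represented by $\Sigma r_w$. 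Linearity writes $[f,X]=\sum_{[w]}n_{[w]}[\Sigma r_w,X]$ and Proposition \ref{prop:lefschetzproperties}(iii) desuspends each term to a multiple of $[r_w,{\quot GH}_+]$. The commutativity axiom applied to $r_v$ and $r_w$ forces $[r_w,{\quot GH}_+]=[r_{vwv^{-1}},{\quot GH}_+]$ for every $v$ with $[v]\in\left(\quot GH\right)^H$, while conjugate subgroups $H'=gHg^{-1}$ yield isomorphic orbits and hence identical classes; these identifications cut the index sets down to $Co(\pi_0(\left(\quot GH\right)^H))$ and to conjugacy classes $(H)$ respectively, exhibiting $A'$ as a generating set of $U_GL$.

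\emph{Basis claim.} Under the hypothesis $W(H)\cong\left(\quot GH\right)^H$ I plan to construct, for each conjugacy class $(H)$, an equivariant linear Lefschetz number $\ell_H$ with values in $\setZ Co(\pi_0(W(H)))$ that sends $[r_w,{\quot GH}_+]$ to the basis element indexed by $[w]$ and vanishes on generators coming from orbits not of type $(H)$. The natural way to produce $\ell_H$ is through the equivariant cellular chain construction together with the Hattori-Stallings trace developed in Sections 3 and 4, restricted to the part of the equivariant chain complex with isotropy exactly $(H)$ and projected to $\setZ Co(\pi_0(W(H)))$; this is essentially the Laitinen-L\"uck invariant. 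With such $\ell_H$ in hand, $\bigoplus_{(H)}\ell_H$ sends the generators of $A'$ to distinct basis elements of the target, which combined with the spanning shown in the second phase forces $A'$ to be a basis of $U_GL$ and $\bigoplus_{(H)}\ell_H$ to be the asserted isomorphism.

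\emph{Main obstacle.} The first two phases are fairly mechanical once one has Lemma \ref{lem:spheresuspension}, the axioms, and the adjunction; the real work there is bookkeeping. The subtle part is the basis claim: linear independence of $A'$ requires genuinely constructing nontrivial Lefschetz numbers, and the hypothesis $W(H)\cong\left(\quot GH\right)^H$ is used precisely to ensure that $\left(\quot GH\right)^H$ is a group rather than merely a monoid, so that the Hattori-Stallings trace takes values in the correct integral group ring and no component information about $w$ is lost.
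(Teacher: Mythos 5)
Your proposal is correct and follows essentially the same route as the paper: skeletal induction via the cofibration axiom plus Lemma \ref{lem:spheresuspension} and the suspension sign for generation by $A$, additivity together with the identification $[{\quot GH}_+\wedge\setS^1,{\quot GH}_+\wedge\setS^1]_G\cong\pi_1\left(\left(\quot GH\right)^H_+\wedge\setS^1\right)$ and commutativity for generation by $A'$, and the Hattori--Stallings trace of $C_*(f^H)$ on the free $\setZ\pi_0(W(H))$-module $C_*(X^H,X^{>H})$ to produce the detecting invariants $\ell_H$. The only (cosmetic) divergence is in the linear-independence bookkeeping: you assert that $\bigoplus_{(H)}\ell_H$ is diagonal on $A'$, which additionally requires checking that $\ell_K$ kills generators of type $(H)>(K)$ (true, since then $\left({\quot GH}\right)^K=\left({\quot GH}\right)^{>K}$), whereas the paper sidesteps that case by choosing $K$ maximal in the subconjugacy order among types with nonzero coefficients.
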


\begin{proof}
 We first prove that $\setU_GL$ is generated by $A$. Let $f:X\to X$ be a self map of a finite $G$-CW complex of dimension $n$. We can assume that $f$ is cellular and so
 \[
  L_G(f)=L_G(f_{n-1})+L_G(\hat{f}),
 \]
 where $\hat{f}$ is the map induced in the quotient $\quot{X_n}{X_{n-1}}\cong\bigvee_{i\in I}{\quot G{H_i}}_+\wedge\setS^n$. By Lemma \ref{lem:spheresuspension}, $\hat{f}$ is equivariantly
 homotopic to the suspension of a $G$-map
 \[
  h:\bigvee_{i\in I}{\quot G{H_i}}_+\wedge\setS^1\to\bigvee_{i\in I}{\quot G{H_i}}_+\wedge\setS^1,
 \]
 provided $n\geq1$. 
 
 Therefore, $L_G(\hat{f})=L_G(\Sigma^{n-1}h)=(-1)^{n-1}L_G(h)$. This, together with the formula $L_G(f)=L_G(f_{n-1})+L_G(\hat{f}),$ yields by induction that
 \[
  L_G(f)=L_G(f_0)+R,
 \]
 where $R$ is a linear combination of elements in $A$. The suspension of a self map of a $0$-dimensional $G$-CW complex is an element of $A$ and
 \[
  L_G(f)=-L_G(\Sigma f).
 \]
 Hence, $L_G(f)$ is a linear combination of elements of $A$, which proves the first assertion.

 Next we assume that we are dealing with the universal linear Lefschetz group $U_GL$. Obviously all the considerations above are still valid. Moreover by additivity, for a $G$-map
 \[
  f:\bigvee{\quot G{H_i}}_+\wedge\setS^1\to\bigvee{\quot G{H_i}}_+\wedge\setS^1,
 \]
 $L_G(f)=\sum L_G(f_i)$, where the $f_i$ are defined as before. This shows that $U_GL$ is already generated by $G$-self maps of equivariant 1-spheres. By the usual reduction techniques,
 \[
  [{\quot GH}_+\wedge\setS^1,{\quot GH}_+\wedge\setS^1]_G\cong\pi_1(\left(\quot GH\right)^H_+\wedge\setS^1)\cong\bigoplus_{w\in\pi_0(\quot GH^H)}\setZ.
 \]
 In particular, the maps $[[g], z]\mapsto[[gw], z]$ generate $[{\quot GH}_+\wedge\setS^1,{\quot GH}_+\wedge\setS^1]_G$, where $w$ runs through representants of elements of 
 $\pi_0(\left(\quot GH\right)^H)$. These maps are the suspensions of maps $r_w$. Linearity again yields that $U_GL$ is generated by the classes of maps $r_w$ before passing to conjugacy 
 classes. Finally by commutativity, $L_G(r_{vwv^{-1}})=L_G(r_w)$ for $v\in W(H)$, which shows that we can pass to conjugacy classes, so $A'$ indeed generates $U_GL$.

 We now assume that $W(H)\cong\left(\quot GH\right)^H$ for every closed subgroup $H$ of $G$. To show that the elements of $A'$ are linearly independent, we need equivariant Lefschetz 
 numbers for comparison purposes. Fix a closed subgroup $H$ of $G$ and let $f:X\to X$ be a $G$-self map of a finite $G$-CW complex $X$. The cellular chain complex $C_*(X^H, X^{>H})$ is a 
 $\setZ W(H)$-module, and by homotopy invariance, it is a $\setZ\pi_0(W(H))$-module. Since $X^H\setminus X^{>H}$ is a free 
 $W(H)$-space, $C_*(X^H, X^{>H})$ is a free $\setZ\pi_0(W(H))$-module. We can assume that $f$ is cellular and thus, $f^H$ induces a $\setZ\pi_0(W(H))$-module homomorphism 
 \[
  C_*(f^H):C_*(X^H, X^{>H})\to C_*(X^H, X^{>H}).
 \]
 $C_*(f^H)$ corresponds to a matrix with entries in $\setZ\pi_0(W(H))$. The sum of the diagonal entries of this matrix followed by the projection to the 
 abelianization of $\setZ\pi_0(W(H))$ has all the properties of a trace map. The abelianization of $\setZ\pi_0(W(H))$ is 
 $\setZ Co(\pi_0(W(H)))$. We obtain a trace map 
 \[
  \trace:\hom_{\setZ\pi_0(W(H))}(C_*(X^H, X^{>H}), C_*(X^H, X^{>H}))\to\setZ Co(\pi_0(W(H))).
 \]
 This is a special instance of the Hattori-Stallings trace, compare Section \ref{section:traces} and also \cite{hattori}. 

 With this trace map, we define
\[
 \ell_H(f)=\trace C_*(f^H).
\]
It is easily seen that $\ell_H$ is a linear equivariant Lefschetz number. Hence, there is a homomorphism 
\[
 U_GL\to\setZ Co(\pi_0(W(H))),\;[f, X]\mapsto\ell_H(f). 
\]
We turn back to the question of linear independence of $A'$. Let an equation
 \[
  \sum_{i\in I}\lambda_ia_i=0
 \]
 be given, where the $a_i$ are pairwise distinct elements of $A'$. Let $K$ be a closed subgroup of $G$ with the property that there is an index $i\in I$ with $\lambda_i\neq0$ and $K$ is 
 maximal with this property with respect to the partial order on isotropy types $(H)$ induced by subconjugacy. Since 
 \[
  \left({\quot GH}_+\right)^K=*
 \]
for subgroups $H$ with either $(H)<(K)$ or with $(H)$ and $(K)$ incomparable, $\ell_K(a_i)=0$ for elements $a_i$ which are self maps of 1-spheres with type such an $(H)$. Thus, 
application of $\ell_K$ to the equation yields
\[
 \sum_{j\in J}\lambda_j\ell_K(a_j)=0,
\]
where $J\subseteq I$ is the set of all indices such that $a_j$ is represented by a self map of ${\quot GK}_+$. This complex consists of two $0$-cells one of which is the base point, hence 
the cellular chain complex $C_*(\left({\quot GK}_+\right)^K, *)$ is isomorphic to $\setZ\pi_0(W(K))$. The map $r_w$ for $w\in W(K)$ induces the map
\[
 \setZ\pi_0(W(K))\to\setZ\pi_0(W(K)),\;v\mapsto vw.
\]
Therefore, its trace is $[w]\in\setZ Co(\pi_0(W(K)))$. We conclude that
\[
 0=\sum_{j\in J}\lambda_j\ell_K(a_j)=\sum_{j\in J}\lambda_j[w_j],
\]
where $[w_j]$ is the unique class in $Co(\pi_0(W(K)))$ representing $a_j$. By definition of $A'$, the elements $[w_j]$ are linearly independent and so we must have $\lambda_j=0$ for all 
$j\in J$, in contradiction to the choice of $K$. It follows that $\lambda_i=0$ for all $i\in I$. 

The statement concerning the explicit isomorphism follows along the lines of this proof. 
\end{proof}

\begin{corollary}\label{cor:lefschetzaxioms}
 An equivariant Lefschetz number is uniquely characterized by its values on $G$-self maps of finite wedge sums of equivariant 1-spheres. In particular, if $(L^1_G, B)$ is an equivariant 
 Lefschetz number and $L^2_G$ is an assignment of an element $L^2_G(f)\in B$ to every equivariant self map $f:X\to X$ of a finite $G$-CW complex, then $L^1_G(f)=L^2_G(f)$ for every self 
 map $f$ if and only if $L^2_G$ satisfies the following axioms.
 \begin{enumerate}
  \item If $f:X\to X$ is equivariantly homotopic to $h:X\to X$, then $L^2_G(f)=L^2_G(h)$.

  \item If $f:X\to Y$, $h:Y\to X$, then
   \[
    L^2_G(f\circ h)=L^2_G(h\circ f).
   \]

  \item If $A\subseteq X$ is a subcomplex and the diagram
  \[
   \xymatrix{
            A\ar[r]\ar[d]^{f\big|_{A}}&X\ar[r]\ar[d]^f&\quot XA\ar[d]^{\hat{f}}\\
            A\ar[r]&X\ar[r]&\quot XA 
            }
  \]
  commutes, then 
  \[
   L^2_G(f)=L^2_G(f\big|_A)+L^2_G(\hat{f}).
  \]

  \item If $X=\bigvee{\quot G{H_i}}_+\wedge\setS^1$ is a finite wedge sum and $f:X\to X$ is a $G$-map, then
   \[
    L^2_G(f)=L^1_G(f).
   \]
\end{enumerate}
\end{corollary}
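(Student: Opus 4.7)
The plan is to deduce the corollary as an immediate application of Theorem \ref{thm:lefschetzbasis} together with the universal property of $\setU_GL$ from Proposition \ref{prop:existslefschetz}, once I know that the axioms listed in the corollary actually force $L^2_G$ to be an equivariant Lefschetz number in the sense of the original definition.

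One direction is tautological: if $L^1_G = L^2_G$, then axioms $1$--$3$ hold because $L^1_G$ is an equivariant Lefschetz number (axioms $1$ and $3$ of the corollary are restrictions of the corresponding definitional axioms, and axiom $2$ coincides with commutativity), while axiom $4$ is trivial. For the converse I first upgrade axiom $1$ of the corollary to the full homotopy invariance axiom in the definition. Given a $G$-homotopy equivalence $k : X \to Y$ with $G$-homotopy inverse $k' : Y \to X$ fitting into the diagram $k \circ f \simeq h \circ k$, I would observe that $h \simeq k \circ f \circ k'$, apply axiom $1$ to get $L^2_G(h) = L^2_G(k \circ f \circ k')$, use commutativity (axiom $2$) to pass to $L^2_G(f \circ k' \circ k)$, and finally apply axiom $1$ again, together with $k' \circ k \simeq \id_X$, to conclude $L^2_G(f \circ k' \circ k) = L^2_G(f)$. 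Thus $L^2_G$ satisfies all three axioms of the definition and is therefore an equivariant Lefschetz number.

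By Proposition \ref{prop:existslefschetz} applied to $L^1_G$ and $L^2_G$, there exist homomorphisms $\varphi_1, \varphi_2 : \setU_GL \to B$ characterized by $\varphi_i(L_G(f)) = L^i_G(f)$. Axiom $4$ of the corollary states precisely that $\varphi_1$ and $\varphi_2$ agree on every element of the generating set $A$ produced in Theorem \ref{thm:lefschetzbasis}. Since $A$ generates $\setU_GL$, the two homomorphisms coincide, whence $L^1_G(f) = \varphi_1(L_G(f)) = \varphi_2(L_G(f)) = L^2_G(f)$ for every self map $f$.

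The only step that is not purely formal is the reduction of the full homotopy invariance axiom to the weaker version stated in the corollary; I regard this as the main (though minor) obstacle. Everything else is a direct consequence of the generation statement of Theorem \ref{thm:lefschetzbasis} combined with the universal property of $\setU_GL$.
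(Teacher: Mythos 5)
Your proposal is correct and matches the paper's argument: both derive full homotopy invariance from axioms 1 and 2 via the homotopy inverse and commutativity, then invoke the universal property of $\setU_GL$ and the fact that the set $A$ from Theorem \ref{thm:lefschetzbasis} generates it. Your write-up is in fact slightly more explicit than the paper's about the factorization through $\varphi_1$ and $\varphi_2$, but there is no substantive difference.
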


\begin{proof}
 The first three axioms imply that $L^2_G$ is an equivariant Lefschetz number. The fact that $L^2_G(f)=L^2_G(h)$ in a diagram
 \[
  \xymatrix{
           X\ar[r]^f\ar[d]^k&X\ar[d]^k\\
           Y\ar[r]^h&Y
           }
 \]
with a $G$-homotopy equivalence $k:X\to Y$ follows from Axioms 1. and 2. since 
\[
 L^2_G(f)=L^2_G(k^{-1}\circ h\circ k)=L^2_G(k^{-1}\circ k\circ h)=L^2_G(h).
\]
We conclude that $L^2_G$ is a homomorphic image of the universal Lefschetz number. Axiom 4. specifies the values of this homomorphism on generators of the universal Lefschetz group and 
consequently, $L^2_G$ and $L^1_G$ are induced by the same homomorphism, therefore equal.
\end{proof}

The following corollary is immediate from the characterization of generators of the universal linear Lefschetz group.

\begin{corollary}\label{cor:linearaxioms}
 If $L^1$ and $L^2$ are both linear, then Axiom 4 can be replaced by
\begin{enumerate}
 \item[4b.] For every closed subgroup $H$ of $G$, $w\in Co(\pi_0(\left(\quot GH\right)^H)$ and $r_w:{\quot GH}_+\to{\quot GH}_+,\;[g]\mapsto[gw]$, 
   \[
    L^2_G(r_w)=L^1_G(r_w).
   \]
 \end{enumerate}
\end{corollary}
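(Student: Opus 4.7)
The proof strategy is to reduce the corollary to the generation statement of Theorem \ref{thm:lefschetzbasis}, namely that $U_GL$ is generated by the set $A'$ of classes $[r_w,{\quot GH}_+]$. Since the hypothesis is that both $L^1_G$ and $L^2_G$ are linear, I do not need to worry about the more delicate basis statement of the theorem, which required the additional assumption $W(H)\cong(\quot GH)^H$; bare generation is all I need, and this holds unconditionally.

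First I would check, just as in the proof of Corollary \ref{cor:lefschetzaxioms}, that Axioms 1--3 together with the assumed linearity turn $L^2_G$ into a linear equivariant Lefschetz number. The argument is the same as in that corollary: homotopy invariance under the equivalence relation used to build $\setU_GL$ follows from Axioms 1 and 2 via
\[
 L^2_G(f)=L^2_G(k^{-1}\circ h\circ k)=L^2_G(k^{-1}\circ k\circ h)=L^2_G(h)
\]
for any $G$-homotopy equivalence $k$ making the usual square commute. Similarly $L^1_G$ is a linear Lefschetz number by assumption. By universality of $(L_G,U_GL)$ on the linear side, each $L^i_G$ factors uniquely as $L^i_G=\varphi^i\circ L_G$ for some homomorphism $\varphi^i\colon U_GL\to B$.

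Next I would invoke Theorem \ref{thm:lefschetzbasis} directly: the classes $[r_w,{\quot GH}_+]$, with $(H)$ ranging over conjugacy classes of closed subgroups of $G$ and $w$ ranging over representatives of $Co(\pi_0((\quot GH)^H))$, generate $U_GL$. Axiom 4b is exactly the statement that $\varphi^1$ and $\varphi^2$ agree on each of these generators. A homomorphism out of a group is determined by its values on any generating set, so $\varphi^1=\varphi^2$, and hence $L^1_G(f)=L^2_G(f)$ for every $G$-self map $f$ of a finite $G$-CW complex.

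There is no real obstacle here; the substantive work was already carried out in Theorem \ref{thm:lefschetzbasis}. The only point worth being careful about is remembering that the generation statement in that theorem does not require the hypothesis $W(H)\cong(\quot GH)^H$ (that hypothesis is only needed for linear independence, and hence for asserting a basis); so the corollary, which only needs generation, has no such restriction on $G$.
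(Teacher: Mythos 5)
Your proposal is correct and follows exactly the route the paper intends: the corollary is stated as ``immediate from the characterization of generators of the universal linear Lefschetz group,'' i.e.\ both linear invariants factor through $U_GL$ and Axiom 4b fixes the two induced homomorphisms on the generating set $A'$ from Theorem \ref{thm:lefschetzbasis}. Your remark that only the generation statement (not the basis statement, and hence no hypothesis on $G$) is needed is also accurate.
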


Finally, by the identification of a basis of the universal Lefschetz group, we have the following result.

\begin{corollary}
 If $G$ is a topological group such that $W(H)\cong\left(\quot GH\right)^H$ for all closed subgroups $H$ of $G$, then the set of equivariant Lefschetz numbers with values in a group $B$ 
 is in bijective correspondence to the set of set maps 
\[
 \bigcup_{(H)}Co(\pi_0(W(H)))\to B.
\]
 The correspondence is given by $\varphi\mapsto \ell_\varphi$, where $\ell_\varphi$ is the image of the universal linear Lefschetz number under the homomorphism
\[
 U_GL\to B,\;[r_w,{\quot GH}_+]\mapsto\varphi([w]).
\]
\end{corollary}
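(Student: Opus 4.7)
The plan is to exploit Theorem \ref{thm:lefschetzbasis} directly. Under the hypothesis $W(H)\cong\left(\quot GH\right)^H$, that theorem asserts that $A'$ is a \emph{basis} of $U_GL$, so $U_GL$ is the free abelian group on the classes $[r_w,{\quot GH}_+]$ indexed by pairs $((H),[w])$, where $(H)$ runs over conjugacy classes of closed subgroups of $G$ and $[w]$ runs over $Co(\pi_0(W(H)))$. The corollary should then reduce to a routine universal-property argument.

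First I would construct the map $\varphi\mapsto\ell_\varphi$. Given a set map $\varphi:\bigcup_{(H)}Co(\pi_0(W(H)))\to B$, freeness of $U_GL$ on $A'$ yields a unique homomorphism $\tilde{\varphi}:U_GL\to B$ sending $[r_w,{\quot GH}_+]$ to $\varphi([w])$. I would then set $\ell_\varphi:=\tilde{\varphi}\circ L_G$, where $L_G$ denotes the universal linear Lefschetz number. Being a homomorphic image of $L_G$, the assignment $\ell_\varphi$ automatically satisfies all four axioms, hence is an equivariant Lefschetz number with values in $B$.

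Next I would construct the inverse. Given an equivariant Lefschetz number $\ell$ with values in $B$, define $\varphi_\ell([w]):=\ell(r_w)$ for $r_w:{\quot GH}_+\to{\quot GH}_+$. Well-definedness on the conjugacy class $[w]\in Co(\pi_0(W(H)))$ follows from the commutativity axiom, exactly as in the proof of Theorem \ref{thm:lefschetzbasis}: for $v\in W(H)$ one has $\ell(r_{vwv^{-1}})=\ell(r_{v^{-1}}\circ r_w\circ r_v)=\ell(r_w\circ r_v\circ r_{v^{-1}})=\ell(r_w)$.

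It remains to check the two compositions are identities. In one direction, $\varphi_{\ell_\varphi}([w])=\ell_\varphi(r_w)=\tilde{\varphi}(L_G(r_w))=\tilde{\varphi}([r_w,{\quot GH}_+])=\varphi([w])$, which is immediate. In the other, universality of $L_G$ presents any $\ell$ as $\psi\circ L_G$ for a unique homomorphism $\psi:U_GL\to B$; the homomorphism $\widetilde{\varphi_\ell}$ agrees with $\psi$ on the basis $A'$ by construction, so by freeness $\widetilde{\varphi_\ell}=\psi$ and thus $\ell_{\varphi_\ell}=\ell$. The only nontrivial input anywhere is the freeness statement from Theorem \ref{thm:lefschetzbasis}; absent that, the would-be inverse map $\varphi\mapsto\ell_\varphi$ might not be well defined on all set maps $\varphi$, so the hypothesis $W(H)\cong\left(\quot GH\right)^H$ enters precisely at this point.
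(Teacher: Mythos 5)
Your proposal is correct and follows exactly the route the paper intends: the paper offers no explicit proof, declaring the corollary immediate from the basis statement of Theorem \ref{thm:lefschetzbasis}, and your argument is precisely the routine freeness/universal-property verification being omitted (including the correct observation that the hypothesis $W(H)\cong\left(\quot GH\right)^H$ is what makes $\varphi\mapsto\ell_\varphi$ well defined on arbitrary set maps). The only caveat, which affects the paper's statement rather than your proof, is that ``equivariant Lefschetz number'' must here be read as \emph{linear} equivariant Lefschetz number, since the correspondence factors through $U_GL$ rather than $\setU_GL$ --- a reading you adopt implicitly and correctly.
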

 
\section{Equivariant Cellular Homology}\label{section:cellhom}
 In this section we briefly sketch the construction of equivariant cellular homology. We mainly follow \cite{ynrohc} and \cite{willson}. Recall that a $G$-CW complex is a $G$-space $X$ 
 that is the colimit of its $n$-skeleta $X_n$. $X_0$ is a disjoint union of $G$-orbits $\quot GH$ and $X_{n+1}$ is constructed from $X_n$ by taking the cofibre of a map from a disjoint
 union of equivariant spheres to $X_n$, i.e. a map
\[
 f:\coprod_{i\in I}\quot G{H_i}\times\setS^n\to X_n.
\]
 The restriction of $f$ to a summand is called an attaching map. $X$ is said to be finite, if $X=X_n$ for some $n$ and only finitely many spheres are attached at each stage. A based $G$-CW
 complex is a $G$-CW complex with a $G$-fixed base point. We do not assume attaching maps to be based. 
  
 We are going to describe a chain complex associated with the filtration of $X$ by its skeleta that is analogous to the cellular chain complex of an ordinary CW complex. For missing
 details, we refer to \cite{willson}. The basic idea is to replace the summand $\setZ$ in the cellular complex corresponding to a cell by the functor $\setZ[\bullet\,,\quot GH]_G$, 
 corresponding to an equivariant cell of type $H$. In the following, $\calO$ will denote a full subcategory of the category of $G$-orbits and $G$-homotopy classes, consisting of finitely 
 many objects. $\calA$ will denote the category of abelian groups.
 
 \begin{definition}
  Let $X$ be a $G$-CW complex. We define the $n$-th cellular chain functor to be the functor
\[
 C^G_n(X):\calO\to\calA,\;C^G_n(X)(\quot GK)=\bigoplus_H\bigoplus_{\stiny{\begin{array}{c}n-\mbox{cells}\\\mbox{of type }H\end{array}}}\setZ[\quot GK, \quot GH]_G.
\]
 For a $G$-map $\varphi:\quot GK\to\quot GL$, $C^G_n(X)(\varphi)$ is defined as post-composition with $\varphi$ on generators.
 \end{definition}

 The functors $C_*^G(X)$ constitute a chain complex of contravariant functors $\calO\to\calA$. The natural transformation $d:C^G_n(X)\to C^G_{n-1}(X)$ is defined as follows. Let $SX$ be 
 the unreduced suspension of a $G$-space. That is, $SX=\quot{X\times\incc{0,1}}\sim$, where $X\times\{0\}$ is identified to a point and $X\times\{1\}$ is identified to another point. We 
 regard $SX$ as a based $G$-space with base point $X\times\{1\}$. Let $\sigma$ be an $n$-cell of $X$, $\tau\cong{\quot GK}_+\wedge\setS^{n-1}$ an $(n-1)$-cell. Let 
 $\alpha:\quot GH\times\setS^{n-1}\to X_{n-1}$ be the attaching map of $\sigma$. $\alpha$ induces a map
 \[
  \quot GH\times S\setS^{n-1}\to SX_{n-1},\;([g], [x, t])\mapsto[\alpha([g], x), t].
 \]
 Since all points $([g], [x, t])$ with $t=1$ are mapped to the base point of $SX_{n-1}$, this in turn induces a map
 \[
  {\quot GH}_+\wedge\setS^n\to SX_{n-1},
 \]
 which we can follow with the quotient map to 
\[
 \quot{SX_{n-1}}{SX_{n-2}}\cong\Sigma\left(\quot{X_{n-1}}{X_{n-2}}\right).
\]
 Together with the projection $p_\tau$ to $\tau$, we obtain a $G$-map
\[
 k:{\quot GH}_+\wedge\setS^n\to\Sigma\left(\quot{X_{n-1}}{X_{n-2}}\right)\slra{\Sigma p_\tau}{\quot GK}_+\wedge\setS^n.
\]
 $k$ induces a map $\setS^n\to\setS^n$ in the group quotient. Let $-1\in\setS^{n-1}\subseteq S\setS^n$, embedded as $\setS^{n-1}\times\{\frac12\}$. We find an equivariant homotopy of $k$ 
 such that the induced map is transverse to $-1\in\setS^n$. This implies that the preimage of $-1$ consists of finitely many points $x_1,\dots, x_{m_{\sigma,\,\tau}}\in\setS^n$. Assuming 
 that $k$ itself has this property, we obtain that the preimage of $\quot GK\times\{-1\}$ under $k$ is $\quot GH\times\{x_1,\dots, x_{m_{\sigma,\,\tau}}\}$. We define
 \[
  \psi^{\sigma,\,\tau}_j:\quot GH\to\quot GK,\;\psi_j([g])=k([g], x_j),
 \]
 where we identify the point $k([g], x_j)=([\tilde{g}], -1)$ with its first component. Furthermore, let $d^{\sigma,\,\tau}_j$ be the local degree of the map induced by $k$ at $x_j$. Let 
 $\varphi:\quot GL\to\quot GH$ be any $G$-map, then
 \[
  d([\varphi]\cdot\sigma)=\sum_\tau\sum_{j=1}^{m_{\sigma,\,\tau}} d^{\sigma,\,\tau}_j[\psi^{\sigma,\,\tau}_j\circ\varphi]\cdot\tau,
 \]
 and we extend linearly to all of $C^G_n(X)(\quot GH)$. It is clear that $d$ constitutes a natural transformation. The fact that $d^2=0$ follows from the fact that forgetting the orbit 
 maps yields the cellular chain functor of the quotient space $\quot XG$.

 If $M:\calO\to\calA$ is any covariant functor, we obtain the equivariant chain complex with coefficients in $M$ as
 \[
  C^G_*(X; M)=C^G_*(X)\tensor_\calO M.
 \]
 The differential is given by $d\tensor\id$. The homology of this chain complex is denoted by $H_n^G(X;M)$ and is called the $n$-th equivariant cellular homology of $X$. The exact 
 definition of the tensor product $\tensor_\calO$ is not important here, because we will find a more convenient description of $C_*^G(X, M)$ later. 

 To turn cellular homology into a functor, we have to specify $C^G_n$ as a functor from the category of $G$-CW complexes to the category of contravariant functors $\calO\to\calA$.
 Thus, we have to define a natural transformation $C^G_n(f):C^G_n(X)\to C^G_n(Y)$ for a $G$-map $f:X\to Y$. It is defined similarly to the boundary operator. Let 
 $\iota_i:{\quot G{H_i}}_+\wedge\setS^n\to\quot{X_n}{X_{n-1}}$ be the inclusion of the $i$-th cell of $X_n$ and let $p_j:\quot{Y_n}{Y_{n-1}}\to{\quot G{K_j}}_+\wedge\setS^n$ be the 
 projection onto the $j$-th cell of $Y$. We can assume that $f$ is cellular. Then we obain a map
 \[
  k:{\quot G{H_i}}_+\wedge\setS^n\slra{\iota_i}\quot{X_n}{X_{n-1}}\slra{f}\quot{Y_n}{Y_{n-1}}\slra{p_j}{\quot G{K_j}}_+\wedge\setS^n.
 \]
 As above, we can assume that $k$ induces a map $\tilde{k}$ transverse to $-1$ in the group quotient and thus, $k$ induces finitely many $G$-maps 
 \[
  \psi^{\sigma,\,\tau}_\ell:\quot G{H_i}\to\quot G{K_j},\;\ell=1,\dots, m.  
 \]
 Let $d_\ell^{\sigma,\,\tau}$ be the local degree of the map $\tilde{k}$ at $x_j$ and $\varphi:\quot GL\to\quot G{H_i}$ any $G$-map. Then we define
 \[
  C^G_n(f)([\varphi]\cdot\sigma)=\sum_\tau\sum_{\ell=1}^{m_{\sigma,\,\tau}} d^{\sigma,\,\tau}_\ell[\psi^{\sigma,\,\tau}_\ell\circ\varphi]\cdot\tau
 \]
 and extend linearly. Note again that by forgetting the orbit map part, we end up with the map $f$ induces in the cellular homology of the quotient.

 For a coefficient system $M$, the map $C_*^G(f; M)$ is defined as $C_*^G(f)\tensor\id_M$.

 We will now specify the coefficients we will use for cellular homology. Let $\calI$ be the free abelian group generated by all homotopy classes of $G$-maps between orbits. There is
 an obvious ring structure on $\calI$, given by composition of homotopy classes whenever composition is possible, and being $0$ otherwise. $\calI$ is called the isotropy ring associated
 to $G$ (or $\calO$). We can turn $\calI$ into a coefficient system as follows.

 The value of $\calI$ on $\quot GH$ is the ideal $\id_{\quot GH}\cdot\calI$. For a $G$-map $\varphi:\quot GH\to\quot GK$, $\calI(\varphi)$ is pre-composition with $\varphi$.
 Since $\id_{\quot GH}$ is an idempotent element in $\calI$, $\calI(\quot GH)$ is a projective right $\calI$-module. Because of the isomorphism
\[
 C_*^G(X)\tensor_\calO\calI\cong\bigoplus_H\calI(\quot GH)
\]
 stemming from the dual Yoneda isomorphism, the equivariant cellular chain complexes of $X$ inherit the structure of projective right $\calI$-modules as well. 

\section{Traces}\label{section:traces}
In order to be able to define the homological equivariant Lefschetz number, we need some background in the theory of traces for module endomorphisms of finitely generated projective
modules over arbitrary rings. The general theory was developed by Hattori in \cite{hattori}, from which much of the following is taken. The resulting trace is called the Hattori-Stallings 
trace. We are especially interested in the behaviour of this trace with respect to homology.

In the following, $R$ will denote an associative ring and by $R$-module we will mean a right $R$-module. For an $R$-module $M$, we let $DM=\hom_R(M, R)$ be the dual module. Furthermore, 
we write $[M, N]$ for $\hom_R(M, N)$, the $R$-module homomorphisms from $M$ to $N$.
\begin{definition}
 Let $P$ be a finitely generated projective $R$-module. Let $R^{ab}$ be the abelianization of $R$ and let $\pi:R\to R^{ab}$ be the projection. The canonical map
\[
 \vartheta:P\tensor DP\to[P,P],\;\vartheta(x\tensor\xi)(y)=x\cdot\xi(y)
\]
is an isomorphism and we can define
\[
 \trace:[P,P]\to R^{ab},\;f\mapsto\pi\circ\eps\circ\vartheta^{-1}(f),
\]
where $\eps:P\tensor DP\to R$ is the evaluation map. $\trace(f)$ is called the Hattori-Stallings trace of $f$.
\end{definition}

\begin{proposition}\label{prop:tracecommutative}
 The Hattori-Stallings trace is commutative in the following sense. If $f:P\to Q$, $g:Q\to P$ are module homomorphisms, then 
\[
 \trace(f\circ g)=\trace(g\circ f).
\]
\end{proposition}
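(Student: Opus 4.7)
The plan is to reduce the statement to the basic observation that, in the abelianization $R^{ab}$, the order of multiplication of two elements of $R$ does not matter. The key tool is the generalization of the canonical map $\vartheta$ used in the definition.

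First I would extend the construction of $\vartheta$ to pairs of finitely generated projective modules: for any such $P$ and $Q$, the canonical maps
\[
 \vartheta_{P,Q}:P\tensor DQ\to[Q,P],\quad (x\tensor\xi)\mapsto(y\mapsto x\cdot\xi(y))
\]
are isomorphisms. This is standard: it holds tautologically when $P$ and $Q$ are free of finite rank, and then passes to finitely generated projective summands by naturality.

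Next I would pick preimages of $f$ and $g$ under these isomorphisms. Write
\[
 g=\sum_i x_i\tensor\xi_i\in P\tensor DQ,\qquad f=\sum_j y_j\tensor\eta_j\in Q\tensor DP,
\]
with $x_i\in P$, $\xi_i\in DQ$, $y_j\in Q$, $\eta_j\in DP$ (the sums are finite). A direct computation, using only $R$-linearity of $\eta_j$ and $\xi_i$ and the relation $x\cdot r\tensor\xi=x\tensor r\xi$ in the tensor product over $R$, shows that
\[
 g\circ f=\sum_{i,j} x_i\cdot\xi_i(y_j)\tensor\eta_j\in P\tensor DP,\quad f\circ g=\sum_{i,j} y_j\cdot\eta_j(x_i)\tensor\xi_i\in Q\tensor DQ.
\]
Applying the evaluation maps and the projection to $R^{ab}$ then gives
\[
 \trace(g\circ f)=\pi\Bigl(\sum_{i,j}\eta_j(x_i)\cdot\xi_i(y_j)\Bigr),\quad \trace(f\circ g)=\pi\Bigl(\sum_{i,j}\xi_i(y_j)\cdot\eta_j(x_i)\Bigr).
\]

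The final step is the observation that these two expressions differ only by the order of the factors $\eta_j(x_i),\xi_i(y_j)\in R$ in each summand, and so coincide after applying $\pi$. The only real subtlety to worry about is consistent handling of right versus left module structures in the canonical identifications; once that bookkeeping is done, the commutativity of the trace reduces entirely to the commutativity of $R^{ab}$.
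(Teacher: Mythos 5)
Your proof is correct and is essentially the paper's own argument: the paper's commutative diagram, built from the generalized isomorphisms $\vartheta_{Q,P}$ and the composition maps $\beta_{Q,P}$, encodes exactly the element-level computation you write out, and both reduce the claim to the identity $\pi(ab)=\pi(ba)$ in $R^{ab}$. No gaps.
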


\begin{proof}
 We define maps
\[
 \vartheta_{Q, P}:Q\tensor DP\to[P, Q],\;\vartheta_{Q,P}(y\tensor\xi)(p)=y\cdot\xi(p).
\]
Note that $\vartheta=\vartheta_{P,P}$. Furthermore, we define
\[
 \beta_{Q, P}:(Q\tensor DP)\times(P\tensor DQ)\to P\tensor DP,\;(y\tensor\xi, x\tensor\eta)\mapsto y\cdot\xi(x)\tensor\eta.
\]
Then the following diagram is easily seen to be commutative, where $\tau$ denotes the twisting map.
\[
 \sfoot{\xymatrix{[Q,Q] & [P, Q]\tensor[Q, P]\ar[r]^\tau\ar[l]_\circ & [Q,P]\tensor[P,Q]\ar[r]^\circ & [P,P]\\
           Q\tensor DQ\ar[u]_\vartheta\ar[rd]_{\pi\circ\eps} & (Q\tensor DP)\tensor(P\tensor DQ)\ar[l]_{\beta_{P,Q}\quad\quad}\ar[u]_{\vartheta_{Q, P}\tensor\vartheta_{P, Q}}\ar[r]^\tau&
           (P\tensor DQ)\tensor(Q\tensor DP)\ar[u]^{\vartheta_{P,Q}\tensor\vartheta_{Q, P}}\ar[r]^{\quad\quad\beta_{Q, P}}&Q\tensor DQ\ar[u]^\vartheta\ar[ld]^{\pi\circ\eps}\\
           &R^{ab}\ar[r]^\id&R^{ab}&}}
\]
The claim follows immediately.
\end{proof}

\begin{lemma}\label{lem:ffromid}
 Let $P$ be a finitely generated projective module and $f:P\to P$ an endomorphism. If 
\[
 \vartheta^{-1}(\id_P)=\sum x_i\tensor\xi_i\in P\tensor DP,
\]
then
\[
 \vartheta^{-1}(f)=\sum f(x_i)\tensor\xi_i.
\]
\end{lemma}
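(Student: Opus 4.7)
The plan is to apply $\vartheta$ directly to the candidate expression $\sum f(x_i)\tensor\xi_i$ and verify that the result equals $f$; since $\vartheta$ is an isomorphism, this suffices. Concretely, I would evaluate $\vartheta\bigl(\sum f(x_i)\tensor\xi_i\bigr)$ at an arbitrary $y\in P$ using the defining formula $\vartheta(x\tensor\xi)(y)=x\cdot\xi(y)$, which immediately gives $\sum f(x_i)\cdot\xi_i(y)$.

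The one substantive step is to commute $f$ past the scalar. Since $\xi_i(y)\in R$ and $f$ is a right $R$-module homomorphism, $f(x_i)\cdot\xi_i(y)=f(x_i\cdot\xi_i(y))$, so the sum equals $f\bigl(\sum x_i\cdot\xi_i(y)\bigr)$. The inner expression is precisely $\vartheta\bigl(\sum x_i\tensor\xi_i\bigr)(y)=\id_P(y)=y$, by the hypothesis on $\vartheta^{-1}(\id_P)$. Therefore $\vartheta\bigl(\sum f(x_i)\tensor\xi_i\bigr)(y)=f(y)$ for every $y$, and applying $\vartheta^{-1}$ to both sides gives the claim.

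There is no real obstacle here; the argument is a direct unwinding of the definitions. The only thing worth emphasizing is that $R$-linearity of $f$ is used essentially, to pull $f$ past the right multiplication by $\xi_i(y)$, so the statement genuinely requires $f$ to be a module homomorphism and not just an abelian group map.
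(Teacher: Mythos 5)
Your argument is correct and is essentially identical to the paper's proof: both apply $\vartheta$ to $\sum f(x_i)\tensor\xi_i$, pull $f$ out by $R$-linearity to get $f\left(\sum x_i\cdot\xi_i(y)\right)=f(y)$, and conclude since $\vartheta$ is an isomorphism. Your remark that $R$-linearity of $f$ is the essential ingredient is a correct observation, implicit in the paper's computation.
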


\begin{proof}
 We calculate
\begin{eqnarray*}
 \vartheta\left(\sum f(x_i)\tensor\xi_i\right)(y)&=&\sum f(x_i)\cdot\xi_i(y)\\
                                      &=&f\left(\sum x_i\cdot\xi_i(y)\right)\\
                                      &=&f(y),
\end{eqnarray*}
and the claim follows.
\end{proof}

\begin{lemma}\label{lem:traceadditivity}
 Let $0\to A\to B\to C\to 0$ be an exact sequence of finitely generated projective $R$-modules and $f$ a morphism of this exact sequence, then $\trace{f_B}=\trace{f_A}+\trace{f_C}$.
\end{lemma}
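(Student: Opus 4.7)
The plan is to reduce to a block-triangular computation using the fact that $C$ is projective. Since $C$ is projective, the sequence $0 \to A \to B \to C \to 0$ splits, so I can fix an isomorphism $B \cong A \oplus C$ and write $f_B$ in block form. Compatibility of $f$ with the exact sequence (namely that $f_B$ carries $A$ into $A$ and descends to $f_C$ on $C$) forces
\[
 f_B = \begin{pmatrix} f_A & g \\ 0 & f_C \end{pmatrix}
\]
for some $g : C \to A$ that depends on the chosen splitting. Everything now lives in a direct sum, so the task is to show that the trace is insensitive to the off-diagonal block $g$ and that it splits along the diagonal.

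To do this I would work at the level of $\vartheta^{-1}$ using Lemma \ref{lem:ffromid}. The splitting gives a natural identification $DB \cong DA \oplus DC$, where $DA$ is embedded in $DB$ as those functionals vanishing on $C$, and dually for $DC$. Writing $\vartheta_A^{-1}(\id_A) = \sum x_i \tensor \xi_i$ and $\vartheta_C^{-1}(\id_C) = \sum y_j \tensor \eta_j$, a direct verification shows
\[
 \vartheta_B^{-1}(\id_B) = \sum x_i \tensor \xi_i + \sum y_j \tensor \eta_j
\]
in $B \tensor DB$, since evaluating the right side via $\vartheta_B$ on an element $a+c$ reproduces $a+c$ by using that $\xi_i$ kills $C$ and $\eta_j$ kills $A$.

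Applying Lemma \ref{lem:ffromid} to $f_B$ then yields
\[
 \vartheta_B^{-1}(f_B) = \sum f_B(x_i) \tensor \xi_i + \sum f_B(y_j) \tensor \eta_j
         = \sum f_A(x_i) \tensor \xi_i + \sum g(y_j) \tensor \eta_j + \sum f_C(y_j) \tensor \eta_j.
\]
Now apply $\pi \circ \eps$. The first summand contributes $\trace(f_A)$ and the third contributes $\trace(f_C)$. The middle summand vanishes: $g(y_j) \in A$ while $\eta_j \in DC \subseteq DB$ annihilates $A$, so $\eps(g(y_j) \tensor \eta_j) = \eta_j(g(y_j)) = 0$. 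Summing gives $\trace(f_B) = \trace(f_A) + \trace(f_C)$.

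The only mildly delicate step is the dual decomposition in step two — one must be careful that the identification $DB \cong DA \oplus DC$ is the one compatible with the splitting (i.e.\ that $DA$ really sits inside $DB$ via the projection $B \to A$), since this is exactly what makes the off-diagonal term $g$ invisible to the trace. Everything else is a mechanical unpacking of the definition of $\vartheta$ and $\eps$.
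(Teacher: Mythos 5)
Your proof is correct and follows essentially the same route as the paper's: split the sequence using projectivity of $C$, decompose $\vartheta^{-1}(\id_B)$ along the two summands, and apply Lemma \ref{lem:ffromid} together with the evaluation map. Your explicit isolation of the off-diagonal block $g:C\to A$, killed by the functionals coming from $DC$, is in fact slightly cleaner than the paper's version, which absorbs that term implicitly via the identity $p_A+p_C=\id_B$.
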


\begin{proof}
 Since all involved modules are projective, the exact sequence splits and we can view $B$ as the direct sum $B=A\oplus C$. Let $i_A:A\to B$, $i_C:C\to B$ be the inclusions. Furthermore, we
 have the projections $p_A:B\to B$, $p_C:B\to B$ onto $A$ and $C$, respectively, and these are idempotent elements in $[B, B]$, satisfying $p_A+p_C=\id_B$. Consider the following four 
 morphisms.
 \[
  p_A\tensor Di_A:B\tensor DB\to A\tensor DA,\;x\tensor\xi\mapsto p_A(x)\tensor\xi\circ i_A,
 \]
\[
 p_C\tensor Di_C:B\tensor DB\to C\tensor DC,\;y\tensor\eta\mapsto p_C(y)\tensor\eta\circ i_B,
\]
\[
 [B, B]\to[A,A],\;\varphi\mapsto p_A\circ\varphi\circ i_A,
\]
\[
 [B,B]\to[C,C],\;\varphi\mapsto p_C\circ\varphi\circ i_C.
\]
These constitute a commutative diagram
\[
 \xymatrix{B\tensor DB\ar[rr]^{p_A\tensor Di_A}\ar[d]_{\vartheta_{B}}&&A\tensor DA\ar[d]^{\vartheta_{A}}\\
                [B,B]\ar[rr]&&[A,A]}
\]
and similarly with $A$ exchanged for $C$. Since $\id_B$ maps to $\id_A$ under the lower map, if we write
\[
 \vartheta_{B}^{-1}(\id_{B})=\sum x_i\tensor\xi_i,
\]
we obtain
\[
 \vartheta_A^{-1}(\id_A)=\sum p_A(x_i)\tensor\xi_i\circ i_A,\;\vartheta_C^{-1}(\id_C)=\sum p_C(x_i)\tensor\xi_i\circ i_C.
\]
 We have $i_A\circ f(a)=f(a)$ and $i_C\circ f(c)=f(c)$ for $a\in A$, $c\in C$. Thus we compute
\begin{eqnarray*}
 \trace{f_A}+\trace{f_B}&=&\pi\circ\eps_A\circ\vartheta_A^{-1}(f_A)+\pi\circ\eps_C\circ\vartheta_C^{-1}(f_C)\\
                        &=&\pi\left(\sum\xi_i\circ i_A\circ f\circ p_A(x_i)\right)+\pi\left(\sum\xi_i\circ i_C\circ f\circ p_C(x_i)\right)\\
                        &=&\pi\left(\sum\xi_i\circ f\circ p_A(x_i)\right)+\pi\left(\sum\xi_i\circ f\circ p_C(x_i)\right)\\
                        &=&\pi\left(\sum\xi_i\circ f(p_A(x_i)+p_C(x_i))\right)\\
                        &=&\pi\left(\sum\xi_i\circ f(x_i)\right)\\
                        &=&\pi\circ\eps_B\circ\vartheta_B^{-1}(f)\\
                        &=&\trace{f}.
\end{eqnarray*}
\end{proof}

\section{The Homological Lefschetz Number}
Let $\setU_G$ be the tom Dieck ring of $G$. That is,
\[
 \setU_G=\bigoplus_{(H)}\setZ\cdot(H),
\]
where the sum ranges over all conjugacy classes of closed subgroups of $G$. $\setU_G$ is the domain of the universal equivariant Euler characteristic, constructed similarly to the
universal equivariant Lefschetz group, compare \cite{tomdieck}. This will however not be of concern in the following. 

Recall the definition of the isotropy ring $\calI$ from Section \ref{section:cellhom}. There is an obvious augmentation map $\eps:\calI\to\setU_G$, defined as follows. Let 
$\varphi:\quot GH\to\quot GK$ be a $G$-map. Then the basis element $[\varphi]$ of $\calI$ maps to $1\cdot(H)$ if and only if $H=K$, and to $0$ otherwise. With the help of this 
augmentation, we can define the topological equivariant Lefschetz number.

\begin{definition}
 Let $X$ be finite a $G$-CW complex, $f:X\to X$ a $G$-map. The unreduced homological equivariant Lefschetz number of $f$ is defined to be
\[
 \Lambda_G(f)=\sum_{n=0}^\infty(-1)^n\cdot\eps(\trace C^G_n(f)).
\]
The (reduced) homological equivariant Lefschetz number is defined as 
\[
 L_G(f)=\Lambda_G(f)-1\cdot(G).
\]
\end{definition}

It is not hard to see that the homological equivariant Lefschetz number is indeed a linear equivariant Lefschetz number in the sense of the definition.

\begin{proposition}\label{prop:homlefaxioms}
 The homological equivariant Lefschetz number satisfies the first three axioms of Corollary \ref{cor:lefschetzaxioms} and is linear.
\end{proposition}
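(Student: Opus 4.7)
The plan is to verify each axiom by reducing it to a corresponding property of the Hattori--Stallings trace applied to the cellular chain functor $C_*^G$. The key ingredients are already assembled in Section~\ref{section:traces}: the trace commutativity (Proposition~\ref{prop:tracecommutative}) and its additivity over short exact sequences of finitely generated projective modules (Lemma~\ref{lem:traceadditivity}). For a finite $G$-CW complex $X$ the modules $C_n^G(X)$ are finitely generated projective right $\calI$-modules, so these tools apply at each chain level, and the augmentation $\eps$ transports the resulting identities from the abelianization of $\calI$ down to $\setU_G$.

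For homotopy invariance I would first use the standard fact that $G$-homotopic cellular maps $f \simeq g$ induce chain homotopic natural transformations $C_*^G(f) \simeq C_*^G(g)$. Writing the chain homotopy as $C_n^G(f) - C_n^G(g) = d_{n+1} h_n + h_{n-1} d_n$, Proposition~\ref{prop:tracecommutative} gives $\trace(d_{n+1} h_n) = \trace(h_n d_{n+1})$ and $\trace(h_{n-1} d_n) = \trace(d_n h_{n-1})$; reindexing one of the alternating sums causes the two contributions to cancel, so $\Lambda_G(f) = \Lambda_G(g)$. Commutativity of $L_G$ follows directly by applying Proposition~\ref{prop:tracecommutative} level-wise to $C_n^G(f): C_n^G(X) \to C_n^G(Y)$ and $C_n^G(h): C_n^G(Y) \to C_n^G(X)$. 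The cofibration axiom follows from the short exact sequence of cellular chain complexes
\[
0 \to C_*^G(A) \to C_*^G(X) \to C_*^G(X/A) \to 0,
\]
which splits degreewise by projectivity; applying Lemma~\ref{lem:traceadditivity} level-wise and summing with signs gives the additivity of $\Lambda_G$, and the correction $-1\cdot (G)$ affects only the $C_0^G$ term and cancels consistently on both sides.

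The delicate point, and where I expect the main technical work, is linearity. Given $f, h : \Sigma X \to \Sigma X$, the $H$-cogroup sum factors as $(f \vee h) \circ c$, where $c : \Sigma X \to \Sigma X \vee \Sigma X$ is the pinch map. The natural cellular structure on $\Sigma X$ has one $(n+1)$-cell for each $n$-cell of $X$, and on cellular chains the pinch map should induce the codiagonal $C_n^G(c) = (i_1)_* + (i_2)_*$ in every positive degree. This can be verified by running through the transversality recipe from Section~\ref{section:cellhom}: for each suspended cell of $\Sigma X$, composition with the projection to either wedge summand is transverse to a chosen regular value with exactly one preimage of local degree $+1$, and the associated orbit map is the identity. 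Combined with functoriality of $C_*^G$ up to chain homotopy and the splitting of cellular chains over a wedge, this identifies $C_n^G(f+h)$ with $C_n^G(f) + C_n^G(h)$ up to chain homotopy, so the alternating trace sums add. The main obstacle I anticipate is bookkeeping around cellularity: $f$, $h$ and $f+h$ must all be approximated by cellular maps, and one must argue that the chain homotopies introduced by these approximations do not disturb the codiagonal identity before the trace is taken.
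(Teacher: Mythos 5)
Your treatment of the first three axioms matches the paper's: commutativity is Proposition~\ref{prop:tracecommutative} applied levelwise, the cofibration axiom comes from the short exact sequence of chain modules plus Lemma~\ref{lem:traceadditivity}, and homotopy invariance is the standard chain-homotopy cancellation (the paper simply declares it obvious). One caution on the cofibration step: the sequence $0\to C_*^G(A)\to C_*^G(X)\to C_*^G(X/A)\to 0$ is \emph{not} exact in degree zero, because $X/A$ acquires an extra basepoint $0$-cell; the quotient of chain modules differs from $C_0^G(X/A)$ by a summand $\bigoplus_K\setZ[\quot GK,\quot GG]_G$ whose trace contributes the $1\cdot(G)$ that the reduction absorbs. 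You gesture at this with ``cancels consistently,'' but the paper spells out a second short exact sequence to make it precise, and the same degree-zero care is needed again below.

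Where you genuinely diverge is linearity. You attack it head-on: identify $f+h$ with a composite through the pinch map, show that the pinch map induces the codiagonal on cellular chains via the transversality recipe, and conclude $C_n^G(f+h)\simeq C_n^G(f)+C_n^G(h)$ up to chain homotopy. This can be made to work, and additivity of the trace in the endomorphism argument then finishes it, but it is exactly the bookkeeping-heavy path you flag as the main obstacle. The paper sidesteps all of it: it proves only that $L_G$ is \emph{additive on a wedge} $X\vee Y$, which at the chain level is nearly trivial --- $C_n^G(X\vee Y)=C_n^G(X)\oplus C_n^G(Y)$ for $n>0$, with a single short exact sequence correcting the shared basepoint in degree zero --- and then invokes Proposition~\ref{prop:axiomslinwedge}, already established purely formally from the axioms, to convert wedge additivity into linearity. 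The lesson is that the formal equivalence of Axioms 4a and 4b was proved earlier precisely so that no one ever has to analyze the pinch map on cellular chains; your route re-derives at the chain level what the folding-map argument of Proposition~\ref{prop:axiomslinwedge} already gives for free.
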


\begin{proof}
 The homotopy axiom is obvious. The commutativity axiom follows from the commutativity of the trace, Proposition \ref{prop:tracecommutative}. Let $X, Y$ be finite $G$-CW complexes and let
 $f:X\to Y$, $h:Y\to X$ be $G$-maps. Then
 \begin{eqnarray*}
  \Lambda_G(f\circ h)&=&\sum_{n=0}^\infty(-1)^n\trace(C_n^G(f\circ h;\calI))\\
                     &=&\sum_{n=0}^\infty(-1)^n\trace(C_n^G(f;\calI)\circ C_n^G(h;\calI)))\\
                     &=&\sum_{n=0}^\infty(-1)^n\trace(C_n^G(f;\calI))\cdot\trace(C_n^G(h;\calI))\\
                     &=&\sum_{n=0}^\infty(-1)^n\trace(C_n^G(h;\calI))\cdot\trace(C_n^G(f;\calI))\\
                     &=&\Lambda_G(h\circ f).
 \end{eqnarray*}
 This clearly carries over to the reduced case. 

 For the cofibration axiom, consider a cofibre sequence $A\slra{i} X\slra{p}\quot XA$. We have the short exact sequences
 \[
  0\to C^G_n(A;\calI)\to C^G_n(X;\calI)\to\quot{C_n^G(X;\calI)}{C_n^G(A; \calI)}\to0,
 \]
 and for $n\geq1$, $\quot{C_n^G(X;\calI)}{C_n^G(A; \calI)}$ is isomorphic to $C_n^G(\quot XA;\calI)$. For $n=0$, 
 \[
 \quot{C_0^G(X;\calI)}{C_0^G(A; \calI)}=\bigoplus_\sigma\bigoplus_K\setZ[\quot GK,\quot GH]_G,
 \]
 where the first sum ranges over the $0$-cells of type $H$ which are not contained in $A_0$. On the other hand,
 \[
  C_0^G(\quot XA;\calI)=\quot{C_0^G(X;\calI)}{C_0^G(A; \calI)}\oplus\bigoplus_K\setZ[\quot GK,\quot GG]_G,
 \]
 the last summand representing the base point. The above short exact sequence for $n=0$ yields
 \[
  \trace(C^G_0(f))=\trace(C^G_0(f\big|_A))+\trace(\tilde{f}),
 \]
 where $\tilde{f}$ is the map $C^G_0(f)$ induces in the quotient. From the short exact sequence
 \[
  0\to C_0^G(*;\calI)\to C_0^G(\quot XA;\calI)\to\quot{C_0^G(X;\calI)}{C_0^G(A; \calI)}\to0,
 \]
 we have $\trace(C^G_0(\hat{f}))=\trace(C^G_0(*))+\trace(\tilde{f})=1\cdot(G)+\trace(\tilde{f})$. Putting all this together yields
 \begin{eqnarray*}
  \Lambda_G(f)&=&\sum_{n=0}^\infty(-1)^n\trace(C_n^G(f;\calI))\\
        &=&\sum_{n=1}^\infty(-1)^n\trace(C^G_n(f\big|_A;\calI))+\sum_{n=1}^\infty(-1)^n\trace(C^G_n(\hat{f};\calI))\\
        &&+\trace(C_0^G(f;\calI))+\trace(C_0^G(\hat{f};\calI))-1\cdot(G)\\
        &=&\Lambda_G(f\big|_A)+\Lambda_G(\hat{f})-1\cdot(G).
 \end{eqnarray*}
 This proves the cofibration axiom for the reduced variant.

 Finally for linearity, we prove that additivity holds on the wedge sum of two complexes. By Propositions \ref{prop:wedgeadditivity} and \ref{prop:axiomslinwedge}, this is more than 
 sufficient to prove the claim. We have 
 \[
  C_n^G(X\vee Y;\calI)=C_n^G(X;\calI)\oplus C_n^G(Y;\calI)
 \]
 for $n>0$. For $n=0$, $C_0^G(X\vee Y;\calI)$ is the free abelian group with basis the sets $[\quot GK, \quot GH]_G$ for each $0$-cell of $X$ or $Y$ of type $H$ which is not the basepoint,
 together with basis elements $[\quot GK, \quot GG]_G$ corresponding to the common base point of $X$ and $Y$. The canonical map $C_0(X;\calI)\oplus C_0(Y;\calI)\to C_0(X\vee Y;\calI)$ has 
 kernel $C_0(*;\calI)$ and we obtain the exact sequence
 \[
  0\to C_0^G(*;\calI)\to C_0^G(X;\calI)\oplus C_0^G(Y;\calI)\to C_0^G(X\vee Y;\calI)\to0.
 \]
 Consequently, $\trace{C^G_0(f)}+\trace{C^G_0(*)}=\trace{C_0^G(f_X)}+\trace{C_0^G(f_Y)}$, where $*:*\to *$ is the unique $G$-map. Clearly, this map has trace $1\cdot(G)$, and hence,
 \begin{eqnarray*}
  L_G(f)&=&\Lambda_G(f)-1\cdot(G)\\
        &=&\sum_{n=0}^\infty(-1)^n\eps(\trace{C_n^G(f)})-1\cdot(G)\\
        &=&\sum_{n=1}^\infty(-1)^n\left(\eps(\trace{C_n^G(f_X)})+\eps(\trace{C_n^G(f)_Y})\right)\\
        & &+\eps(\trace{C_0^G(f_X)})+\eps(\trace{C_0^G(f_Y)})-2\cdot(G)\\
        &=&L_G(f_X)+L_G(f_Y).
 \end{eqnarray*}
\end{proof}

We proceed to calculate the value of $L_G$ on finite wedge sums of equivariant 1-spheres in order to be able to compare it, by means of Corollary \ref{cor:lefschetzaxioms}, to other 
equivariant Lefschetz numbers.

\begin{proposition}\label{prop:homlefonspheres}
 Let $X=\bigvee_{i\in I}{\quot G{H_i}}_+\wedge\setS^1$ be a finite wedge of equivariant 1-spheres and $f:X\to X$ a $G$-map. Let $f_i$ be the map $f$ induces on the $i$-th summand via 
 inclusion and projection. Then
\[
 L_G(f)=\sum_{i\in I}L(\conj{f}_i)\cdot(H),
\]
 where $\conj{f}_i$ is the map $f_i$ induces in the group quotient and $L$ denotes the ordinary reduced Lefschetz number.
\end{proposition}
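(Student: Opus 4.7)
The plan is to reduce to the case of a single sphere by additivity, then compute the homological Lefschetz number directly from the cellular chain complex. By Proposition \ref{prop:homlefaxioms}, $L_G$ is linear, so by Proposition \ref{prop:wedgeadditivity} it is additive over wedge decompositions. Hence $L_G(f) = \sum_i L_G(f_i)$, and it suffices to establish the formula
\[
 L_G(g) = L(\bar g)\cdot (H)
\]
for a single $G$-self map $g:(G/H)_+\wedge \setS^1\to(G/H)_+\wedge \setS^1$.

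For this $X$ the minimal equivariant cell decomposition has one $0$-cell of type $G$ (the basepoint) and one $1$-cell of type $H$, with no higher cells. Hence
\[
 C_0^G(X;\calI)=\id_{\quot GG}\cdot\calI,\quad C_1^G(X;\calI)=\id_{\quot GH}\cdot\calI,
\]
as projective right $\calI$-modules. Since $g$ fixes the basepoint, $C_0^G(g;\calI)$ is the identity, and by Lemma \ref{lem:ffromid} its Hattori-Stallings trace pushes forward under $\eps$ to $1\cdot(G)$. For the $1$-cell I would unwind the definition of $C_1^G(g)$ from Section \ref{section:cellhom}: after a transverse perturbation of $g$ viewed as a map of $(G/H)_+\wedge\setS^1$, the preimages of $-1\in\setS^1$ in the orbit quotient yield finitely many $G$-maps $\psi_j:\quot GH\to\quot GH$ with local degrees $d_j$, so that $C_1^G(g;\calI)(\id_{\quot GH})=\sum_j d_j[\psi_j]$. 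Applying Lemma \ref{lem:ffromid} and then $\eps:\calI\to\setU_G$ collapses each $[\psi_j]$ to $(H)$, since source and target have common type $(H)$. The residual integer $\sum_j d_j$ is exactly the Brouwer degree of the induced orbit-quotient map $\bar g:\setS^1\to\setS^1$, giving $\eps(\trace C_1^G(g;\calI))=\deg(\bar g)\cdot(H)$.

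Combining both contributions,
\[
 \Lambda_G(g)=1\cdot(G)-\deg(\bar g)\cdot(H),
\]
so $L_G(g)=-\deg(\bar g)\cdot(H)=L(\bar g)\cdot(H)$, using the standard fact that the reduced Lefschetz number of a circle self-map equals minus its degree. Summing over $i\in I$ recovers the stated formula. The main obstacle is the explicit identification $\eps(\trace C_1^G(g;\calI))=\deg(\bar g)\cdot(H)$: one must verify that after augmentation the specific $G$-maps $\psi_j$ contribute nothing beyond their common isotropy type, so the equivariant count reduces cleanly to an ordinary Brouwer degree in the orbit quotient. This is essentially bookkeeping once the definitions of $\vartheta$, $\pi$ and $\eps$ are laid out, together with the observation (already used in the proof of Theorem \ref{thm:lefschetzbasis}) that the cellular boundary and transfer formulas behave coherently with respect to orbit quotients.
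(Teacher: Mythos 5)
Your proposal is correct and follows essentially the same route as the paper: reduce to a single equivariant $1$-sphere by linearity/additivity, compute $C_1^G(f;\calI)$ via the transversality description with local degrees $d_j$ and maps $\psi_j$, apply Lemma \ref{lem:ffromid} to get $\trace(C_1^G(f;\calI))=\sum_j d_j[\psi_j]$, and collapse under $\eps$ to $\sum_j d_j\cdot(H)=\deg(\conj{f})\cdot(H)$, whence $L_G(f)=-\deg(\conj{f})\cdot(H)=L(\conj{f})\cdot(H)$. The only difference is cosmetic: the paper exhibits the element $\vartheta^{-1}(\id)=[\id_{\quot GH}]\tensor\xi$ explicitly rather than leaving it as bookkeeping.
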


\begin{proof}
By linearity, Proposition \ref{prop:homlefaxioms}, it suffices to assume that $X$ is a single equivariant 1-sphere. We can assume for the rest of the proof that $\conj{f}$ is transverse 
regular at $-1\in\setS^1$. The map induced by $\conj{f}$ in first homology is multiplication by the degree of $\conj{f}$, which is the sum of local degrees, computed at the preimages of 
$-1$. To write down a formula, let $\conj{f}^{-1}(-1)=\{x_1,\dots, x_m\}$ and $d_j$ be the local degree of $\conj{f}$ at $x_j$, $j=1, \dots, m$, then
\[
 H_1(\conj{f})(\alpha)=\sum_{j=1}^md_j\cdot\alpha. 
\]
It follows that the Lefschetz number of $\conj{f}$ is $1-\sum_{j=1}^md_j$ and so for the reduced Lefschetz number we have $L(\conj{f})=-\sum_{j=1}^md_j$.

For the equivariant Lefschetz number, we note that 
\[
C_1^G(X;\calI)=\bigoplus_{K}\setZ[\quot GK, \quot GH]_G. 
\]
Let $\psi_j:\quot GH\to\quot GH$ be the $G$-map obtained by restricting $f$ to $\quot GH\times\{x_j\}$, $j=1,\dots, m$. By definition, the induced map in the first cellular chain module 
of $X$ is given as
\[
 C^G_1(f;\calI)([\varphi])=\sum_{j=1}^md_j\cdot[\psi_j\circ\varphi],
\]
where $\varphi:\quot GK\to\quot GH$ is any $G$-map. Now let
\[
 x=[\id_{\quot GH}]\in C_1^G(X; \calI)
\]
and
\[
 \xi:C_1^G(X; \calI)\to\calI,\;[\varphi]\mapsto[\varphi],
\]
extended linearly. Since $[\varphi]=\xi(x\cdot[\varphi])$, we have that
\[
 \vartheta^{-1}(\id_{C_1^G(X;\calI)})=x\tensor\xi.
\]
Thus, $\vartheta^{-1}(C_1^G(f;\calI))=C_1^G(f;\calI)(x)\tensor\xi$ by Lemma \ref{lem:ffromid}, and the trace of $C_1^G(f;\calI)$ is calculated as
\[
 \trace(C_1^G(f;\calI))=\xi(C_1^G(f;\calI)(x))=\sum_{j=1}^md_j[\psi_j]\in\calI^{ab}.
\]
The augmentation sends all elements $[\psi_j]$ to $1\cdot(H)$, since these are self-maps of $\quot GH$. Therefore,
\[
 \eps(\trace(C_1^G(f;\calI)))=\sum_{j=1}^md_j\cdot(H).
\]
Clearly, $\eps(\trace(C_0^G(f;\calI)))=1\cdot(G)$, since we have a single $0$-cell of type $(G)$. We conclude that
\[
 \Lambda_G(f)=1\cdot(G)-\sum_{j=1}^md_j\cdot(H),
\]
which proves that indeed $L_G(f)=L(\conj{f})\cdot(H)$.
\end{proof}

\begin{corollary}
 The homological equivariant Lefschetz number is the unique linear equivariant Lefschetz number $L_G$ such that $L_G(\varphi)=1\cdot(H)$ for every self map $\varphi$ of ${\quot GH}_+$.
\end{corollary}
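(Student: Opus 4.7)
The plan is to reduce the uniqueness assertion to Corollary \ref{cor:linearaxioms} by checking that the homological equivariant Lefschetz number itself takes the value $1\cdot(H)$ on the relevant self maps of ${\quot GH}_+$. By Proposition \ref{prop:homlefaxioms}, $L_G$ is a linear equivariant Lefschetz number, so Corollary \ref{cor:linearaxioms} guarantees that $L_G$ is determined among all such invariants by its values on the maps $r_w:{\quot GH}_+\to{\quot GH}_+$. Hence it suffices to prove that $L_G(r_w)=1\cdot(H)$ for each such $r_w$.

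The computation of $L_G(r_w)$ is a degree-zero version of the argument in Proposition \ref{prop:homlefonspheres}. The complex ${\quot GH}_+$ has no cells above dimension zero, and its cellular chain functor splits as
\[
 C_0^G({\quot GH}_+;\calI)=\calI(\quot GG)\oplus\calI(\quot GH),
\]
with the first summand corresponding to the basepoint and the second to the orbit $\quot GH$. The induced map $C_0^G(r_w;\calI)$ is the identity on $\calI(\quot GG)$ and post-composition with $r_w$ on $\calI(\quot GH)$. Writing $\vartheta^{-1}(\id_{\calI(\quot GH)})=[\id_{\quot GH}]\tensor\xi$, Lemma \ref{lem:ffromid} gives $\vartheta^{-1}$ of the second block as $[r_w]\tensor\xi$, whose Hattori--Stallings trace is $[r_w]\in\calI^{ab}$. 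Since $r_w$ is a self-map of $\quot GH$, the augmentation $\eps$ sends $[r_w]$ to $1\cdot(H)$. Combining the two blocks,
\[
 \Lambda_G(r_w)=\eps(\trace C_0^G(r_w;\calI))=1\cdot(G)+1\cdot(H),
\]
so $L_G(r_w)=\Lambda_G(r_w)-1\cdot(G)=1\cdot(H)$.

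With this in hand, suppose $L'_G$ is any linear equivariant Lefschetz number satisfying $L'_G(\varphi)=1\cdot(H)$ for every self map $\varphi$ of ${\quot GH}_+$. Then in particular $L'_G(r_w)=1\cdot(H)=L_G(r_w)$ for all closed subgroups $H$ and representatives $w\in Co(\pi_0((\quot GH)^H))$, so Corollary \ref{cor:linearaxioms} forces $L'_G=L_G$. The main step is the trace calculation above; everything else is an invocation of the axiomatic characterization already established.
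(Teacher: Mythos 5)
Your proof is correct, and the uniqueness half (reduce to Corollary \ref{cor:linearaxioms} via Proposition \ref{prop:homlefaxioms}) is exactly what the paper does. Where you diverge is in the computation of the value $1\cdot(H)$: the paper's own argument is two lines --- the quotient of $\Sigma\varphi$, a self map of ${\quot GH}_+\wedge\setS^1$, is the identity on $\setS^1$, whose reduced Lefschetz number is $-1$, so Proposition \ref{prop:homlefonspheres} gives $L_G(\Sigma\varphi)=-1\cdot(H)$ and the suspension identity $L_G(\varphi)=-L_G(\Sigma\varphi)$ from Proposition \ref{prop:lefschetzproperties} finishes it. You instead carry out the Hattori--Stallings trace directly in degree zero, splitting $C_0^G({\quot GH}_+;\calI)$ into the basepoint block $\calI(\quot GG)$ and the orbit block $\calI(\quot GH)$ and applying Lemma \ref{lem:ffromid} (with Lemma \ref{lem:traceadditivity} implicitly summing the two blocks). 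That computation is right: the trace is $[\id_{\quot GG}]+[r_w]\in\calI^{ab}$, which augments to $1\cdot(G)+1\cdot(H)$. The paper's route buys brevity by recycling the trace work already done for $1$-spheres; yours buys a self-contained verification on the orbit itself that does not pass through suspension. One small remark: the corollary asserts the normalization for \emph{every} self map $\varphi$ of ${\quot GH}_+$, while you verify it only for the chosen representatives $r_w$. That is enough for uniqueness, but for the existence half you should add the observation that every non-constant pointed self map of ${\quot GH}_+$ equals $r_g$ for some $g$ with $g^{-1}Hg\subseteq H$, so your trace computation applies verbatim to all of them.
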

\begin{proof}
 The quotient of the suspension of a $G$-map of orbits is the identity on $\setS^1$, so its reduced non-equivariant Lefschetz number is $-1$. The claim follows from Proposition
 \ref{prop:homlefonspheres} and Corollary \ref{cor:linearaxioms}. 
\end{proof}

\section{The Analytical Lefschetz Number}
In this section we will define a homotopy invariant on $G$-ANRs that will be seen to satisfy the axioms of Corollary \ref{cor:lefschetzaxioms}. It was originally constructed by Dzedzej in
\cite{dzedzej}. We briefly recall some facts from the theory of ANRs and $G$-ANRs.

\begin{definition}
 An absolute neighbourhood retract, or ANR, is a metric space $X$ satisfying the following universal property. Whenever $i:X\to M$ is an isometric embedding of $X$ into a metric space $M$,
 there is a neighbourhood $U\subseteq M$ of $i(X)$ and a retraction $r:U\to X$.
\end{definition}

The notion of a $G$-ANR is apparent. The basic theory of $G$-ANRs was developed by Murayama in \cite{murayama}. There he proves the following proposition. Recall the notion of isotropy 
subgroup of an element $x$ in a $G$-space $X$. It is the group $G_x=\{g\in G\;|\;g.x=x\}$. We denote
\begin{eqnarray*}
 X_{(H)}&=&\{x\in X\;|\;(G_x)=(H)\}\\
 X_{\geq(H)}&=&\{x\in X\;|\;(G_x)\geq(H)\}\\
 X_{>(H)}&=&\{x\in X_{\geq(H)}\;|\;(G_x)\neq(H)\},
\end{eqnarray*}
the partial order, as before, being subconjugacy of conjugacy classes of closed subgroups.

\begin{proposition} Let $H$ be a closed subgroup of $G$ and let $X$ be a $G$-ANR. Then the spaces $X_{(H)}, X_{\geq(H)}$ and $X_{>(H)}$ are $G$-ANRs and the quotient $\quot XG$ is an 
ordinary ANR. Furthermore, any finite $G$-CW complex is a $G$-ANR.
\end{proposition}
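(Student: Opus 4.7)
The plan is to split the proposition into its constituent claims and attack each with standard equivariant retract arguments, using the hypothesis that $X$ already sits as a $G$-equivariant retract of an open neighbourhood inside some ambient $G$-space.

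First I would establish the set-theoretic structure of the strata and then promote it to an ANR statement. The set $X_{\geq(H)}$ is closed in $X$ because the isotropy function $x\mapsto G_x$ is upper semi-continuous (the condition that every $h$ in a compact set fixes $x$ is jointly closed in $h$ and $x$); applied to strict supergroups of $H$ up to conjugacy, the same argument shows that $X_{>(H)}$ is closed in $X_{\geq(H)}$, and hence $X_{(H)}=X_{\geq(H)}\setminus X_{>(H)}$ is open in $X_{\geq(H)}$. To obtain the $G$-ANR property of $X_{\geq(H)}$, embed $X$ equivariantly and isometrically into a linear $G$-space $M$ via an equivariant Arens--Eells-type construction, pick a $G$-equivariant retraction $r:U\to X$ from an invariant open neighbourhood $U$, and observe that since $r$ is equivariant it preserves isotropy, so it restricts to an equivariant retraction $r^{-1}(X_{\geq(H)})\cap M_{\geq(H)}\to X_{\geq(H)}$. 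Combined with the general principle that open invariant subsets of $G$-ANRs are $G$-ANRs, this simultaneously handles $X_{(H)}$ and $X_{>(H)}$.

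For the orbit space $\quot XG$ being an ordinary ANR, equip it with the orbit-distance metric and descend the equivariant retraction $r:U\to X$ to $\bar r:\quot UG\to\quot XG$. Some care is needed to verify that $\quot UG$ carries a compatible metric making it a genuine ambient ANR and that $\bar r$ is a continuous neighbourhood retraction; this leans on the properness of the action implicit in the $G$-ANR setting, and I expect this metric-theoretic bookkeeping to be the main technical obstacle of the whole proposition.

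Finally, that every finite $G$-CW complex is a $G$-ANR follows by induction on the number of cells. The empty complex is trivial; given $X_{n-1}$ as a $G$-ANR, adjoining an equivariant cell yields the pushout
\[
X_n=X_{n-1}\cup_\alpha(\quot G{H_i}\times D^n).
\]
The base case uses that each orbit $\quot GH$ is a $G$-ANR (it embeds equivariantly and closedly into a linear $G$-representation with an equivariant tubular neighbourhood), and the inductive step applies the equivariant gluing principle that the pushout of $G$-ANRs along a closed equivariant cofibration is again a $G$-ANR, together with the fact that products of $G$-ANRs with ANRs are $G$-ANRs.
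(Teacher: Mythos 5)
The paper itself offers no proof of this proposition: it is quoted verbatim as a result of Murayama \cite{murayama}, so there is no internal argument to compare yours against. Judged on its own terms, your sketch has a sensible architecture (stratify, embed equivariantly, restrict the neighbourhood retraction; induct over cells for the CW statement), but several of its load-bearing steps are either wrong as stated or merely restate the difficulty at one remove.

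Concretely: first, an equivariant map does not preserve isotropy, it can only enlarge it ($G_x\subseteq G_{r(x)}$); this happens to point in the right direction for $X_{\geq(H)}$ and $X_{>(H)}$, but the claim as written is false and would fail if you needed it for $X_{(H)}$ directly. Second, showing that $r$ restricts to a retraction of $U\cap M_{\geq(H)}$ onto $X_{\geq(H)}$ only exhibits $X_{\geq(H)}$ as a $G$-neighbourhood retract of $M_{\geq(H)}$; to conclude it is a $G$-ANR you still need $M_{\geq(H)}$ itself to be a $G$-ANR, and for the infinite-dimensional linear $G$-space produced by an Arens--Eells-type embedding that is a statement of essentially the same depth as the one being proved. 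Third, the orbit-space claim is the real content of the proposition and your paragraph defers it entirely: descending $r$ to $\bar r:\quot UG\to\quot XG$ presupposes that $\quot UG$, hence the orbit space of the ambient linear $G$-space, is already known to be an ANR, which is the same theorem one level up. Fourth, upper semicontinuity of $x\mapsto G_x$, the antisymmetry of subconjugacy, and the closedness of $X_{>(H)}$ all require $G$ to be a compact Lie group (via the slice theorem); this hypothesis should be made explicit, since Section 2 of the paper allows arbitrary topological groups. The cell-by-cell induction for the final claim is sound, but only modulo the equivariant adjunction-space theorem and the $G$-ANR property of orbits, which are themselves among Murayama's results. In short: a reasonable outline whose three hard ingredients are exactly the content of \cite{murayama} and are assumed rather than proved.
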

   
Let $X\subseteq U$ be a pair of ANRs, $f:X\to U$ a continuous map having no fixed points on $\partial X$. For simplicity we assume that $U$ can be embedded into $\setR^n$ via an embedding 
$i:U\to\setR^n$, otherwise we have to put some compactness assumption on $f$. Now $i$ embeds $X$ as well and so we find a neighbourhood $V$ of $i(X)$ and a retraction $r:V\to i(X)$. The 
map $i\circ f\circ i^{-1}\circ r:V\to V$ is well-defined and has no fixed points outside of $i(X)$. Hence, its fixed point index (see \cite{nussbaum}) is well-defined and we write 
$i(f, X, U)$ for this number. It is shown in \cite{nussbaum} that this index is well-defined and enjoys all the usual properties of an index.
      
Next we take $X\subseteq U$ to be a pair of $G$-ANRs, $f:X\to U$ a $G$-map without fixed orbits on $\partial X$. Since $\quot XG$ and $\quot UG$ are ANRs, the fixed point index of the 
induced map $\conj{f}:\quot XG\to\quot UG$ is defined and so are the various indices of the induced maps $\conj{f_{\geq(H)}}:\quot{X_{\geq(H)}}G\to\quot{U_{\geq(H)}}G$ and 
$\conj{f_{>(H)}}:\quot{X_{>(H)}}G\to\quot{U_{>(H)}}G$ for closed subgroups $H\subseteq G$. So to each orbit type $(H)$ of $X$, we can assign the integer
\[
 i_{(H)}(f, X, U)=i(\conj{f_{\geq(H)}}, \quot{X_{\geq(H)}}G, \quot{U_{\geq(H)}}G)-i(\conj{f_{>(H)}}, \quot{X_{>(H)}}G, \quot{U_{>(H)}}G). 
\]
       
Assume $X\subseteq U$ are $G$-ANRs with finite orbit type. Then the fixed orbit index of $f$ is defined to be the element
\[
 i_G(f, X, U)=\sum_{(H)}i_{(H)}(f)\cdot(H)\in\setU_G,
\]
In particular, if $X=U$ is a finite $G$-CW complex, the fixed orbit index of a $G$-map $f:X\to X$ is defined. In that case we write $i_G(f, X)$ for $i_G(f, X, X)$.
      
We have augmentation maps 
\[
\eps_{(H)}=U_G\to\setZ,\;\eps_{(H)}(a)=\sum_{(K)\geq(H)}\pi_{(K)}(a), 
\]
where $\pi_{(K)}:U_G\to\setZ$ is the projection onto the $(K)$-th summand. Thus, $\eps{(H)}$ takes all the coefficients of orbit types less or equal to $(H)$ and sums them up. 

\begin{definition}
 Let $X$ be a $G$-CW complex and $f:X\to X$ a $G$-map. The reduced fixed orbit index of $f$ is defined to be the element
\[
 L_G(f)=i_G(f, X)-1\cdot(G).
\]
\end{definition}

We note that in fact the reduced fixed orbit index can be defined as
\[
 L_G(f)=\sum_{(H)}\left(\hat{i}(\conj{f_{\geq(H)}}, \quot{X_{\geq(H)}}G)-\hat{i}(\conj{f_{>(H)}}, \quot{X_{>(H)}}G)\right)\cdot(H),
\]
where $\hat{i}$ is the reduced fixed orbit index, i.e. $\hat{i}(f)=i(f)-1$. Obviously, all the terms coming from the $-1$-part cancel out except for the last one corresponding to $(G)$, 
which shows that the two definitions are equal. Since the fixed point index equals the Lefschetz number on finite CW complexes, the last equality takes the more convenient form
\[
 L_G(f)=\sum_{(H)}\left(L(\conj{f}_{\geq(H)})-L(\conj{f}_{>(H)})\right)\cdot(H)
\]

\begin{proposition}
 The reduced fixed orbit index satisfies the first three axioms of Corollary \ref{cor:lefschetzaxioms}.
\end{proposition}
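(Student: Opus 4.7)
The plan is to verify each of the three axioms separately by working with the reformulation
\[
L_G(f)=\sum_{(H)}\bigl(L(\conj{f}_{\geq(H)})-L(\conj{f}_{>(H)})\bigr)\cdot(H)
\]
that the author has just recorded. Each axiom thereby reduces to the corresponding classical property of the ordinary reduced Lefschetz number on the ANR quotients $\quot{X_{\geq(H)}}G$ and $\quot{X_{>(H)}}G$; Murayama's proposition guarantees that these are genuine ANRs, so the classical theory of the fixed point index from \cite{nussbaum} is available. My general strategy in each case will be: restrict to an orbit-type stratum, pass to the $G$-quotient, invoke the classical property, and then sum over $(H)$.

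For homotopy invariance, a $G$-homotopy equivalence $k\colon X\to Y$ preserves isotropy types, hence restricts to $G$-homotopy equivalences $X_{\geq(H)}\to Y_{\geq(H)}$ and $X_{>(H)}\to Y_{>(H)}$, and the defining $G$-homotopy commuting square for $f$ and $h$ restricts to each stratum by equivariance of the homotopy. After taking $G$-quotients, the classical homotopy invariance of the reduced Lefschetz number yields $L(\conj{f}_{\geq(H)})=L(\conj{h}_{\geq(H)})$ and likewise for $>(H)$, so summing gives $L_G(f)=L_G(h)$. Commutativity is handled in the same way: for $G$-maps $f\colon X\to Y$ and $h\colon Y\to X$ we have $(h\circ f)_{\geq(H)}=h_{\geq(H)}\circ f_{\geq(H)}$ and the symmetric identity for $f\circ h$, and likewise on the $>(H)$ stratum, so passing to $G$-quotients reduces the claim to the commutativity of the ordinary Lefschetz number applied to the pairs of maps $\conj{f}_{\geq(H)}$, $\conj{h}_{\geq(H)}$ and their $>(H)$-counterparts.

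The cofibration axiom is the most involved step. My plan is to start by checking the natural identity
\[
\left(\quot XA\right)_{\geq(H)}=\quot{X_{\geq(H)}}{A_{\geq(H)}}
\]
of pointed $G$-spaces, and the analogous identity for $>(H)$, both of which follow because the isotropy of a point of $X\setminus A$ is unchanged when $A$ is collapsed and because the basepoint of $\quot XA$ has isotropy $(G)\geq(H)$. Dividing by $G$ then gives a cofibre sequence of ordinary ANRs
\[
\quot{A_{\geq(H)}}G\hookrightarrow\quot{X_{\geq(H)}}G\to\quot{X_{\geq(H)}}G\big/\quot{A_{\geq(H)}}G
\]
on which $\conj{f|_A}_{\geq(H)}$, $\conj{f}_{\geq(H)}$ and $\conj{\hat{f}}_{\geq(H)}$ are compatible, and the analogous sequence for $>(H)$. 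The cofibration axiom for the classical reduced Lefschetz number applied stratum by stratum, followed by the subtraction $\geq(H)$ minus $>(H)$ and summing over $(H)$, then yields $L_G(f)=L_G(f|_A)+L_G(\hat{f})$. The main obstacle will be the accounting around the basepoint of $\quot XA$: the \emph{unreduced} cofibration formula carries an extra $+1$ coming from the collapsed basepoint on every stratum, not only on the top stratum $(G)$. I need to verify that the passage to the reduced Lefschetz number $L$ that is built into the reformulation above (equivalently, the single global subtraction of $1\cdot(G)$ in the definition of $L_G$) precisely absorbs all of these stratum-wise basepoint contributions simultaneously; once this bookkeeping is in place, the rest of the proof is a routine application of the classical axioms.
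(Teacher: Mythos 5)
Your proof is correct and is essentially the paper's argument: the paper disposes of all three axioms in one line by observing that they follow from the corresponding properties of the non-equivariant reduced Lefschetz number applied to the quotients $\quot{X_{\geq(H)}}G$ and $\quot{X_{>(H)}}G$, which is exactly your stratum-by-stratum reduction. The bookkeeping you flag at the end is a non-issue: since the formula you work with already uses the \emph{reduced} Lefschetz number $L=\Lambda-1$ on every stratum, and $L$ is exactly additive on cofibre sequences (the $-1$ absorbs the basepoint contribution $\Lambda(*)=1$ stratum by stratum), no residual correction terms appear.
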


\begin{proof}
This follows immediately from the fact that the non-equivariant Lefschetz number satisfies the corresponding axioms with trivial group action.
\end{proof}

We compute the value of the fixed orbit index on a wedge sum of equivariant $1$-spheres. Along the lines it turns out that the fixed orbit index is a linear equivariant Lefschetz number,
which of course could also have been proven directly.

\begin{proposition}
 Let $X=\bigvee_{i\in I}{\quot G{H_i}}_+\wedge\setS^1$ be a finite wedge sum of equivariant 1-spheres and $f:X\to X$ a $G$-map. Then
 \[
  L_G(f)=\sum_{i\in I}L(\conj{f}_i)\cdot(H_i),
 \]
 where $f_i$ is the self map of ${\quot G{H_i}}_+\wedge\setS^1$ induced by $f$ via inclusion of and projection to the $i$-th summand and $\conj{f}_i$ is the map $f_i$ induces in the group 
 quotient. In particular, the fixed orbit index equals the homological equivariant Lefschetz number by Corollary \ref{cor:lefschetzaxioms}.
\end{proposition}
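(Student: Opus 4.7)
The plan is to evaluate the fixed orbit index directly on wedge sums of equivariant $1$-spheres by unpacking the displayed formula
\[
 L_G(f)=\sum_{(K)}\bigl(L(\conj{f}_{\geq(K)})-L(\conj{f}_{>(K)})\bigr)\cdot(K)
\]
recalled just before the proposition, and then to identify the fixed orbit index with the homological equivariant Lefschetz number via a single appeal to Corollary \ref{cor:lefschetzaxioms}.

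First I would describe the isotropy strata of $X$. A non-basepoint in the summand ${\quot G{H_i}}_+\wedge\setS^1$ has isotropy conjugate to $H_i$, while the common basepoint is $G$-fixed. Hence, for every closed subgroup $K$ of $G$,
\[
 X_{\geq(K)}=\bigvee_{i\in I_K}{\quot G{H_i}}_+\wedge\setS^1,\qquad I_K=\{i\in I:(H_i)\geq(K)\},
\]
and after taking $G$-quotients I obtain $X_{\geq(K)}/G\cong\bigvee_{i\in I_K}\setS^1$, with the analogous statement for $X_{>(K)}$. Equivariance of $f$ automatically yields $f(X_{\geq(K)})\subseteq X_{\geq(K)}$, so $\conj{f}_{\geq(K)}$ is a bona fide self-map on this wedge of ordinary circles.

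The second ingredient is the standard calculation of the reduced Lefschetz number of a self-map $\varphi$ of a wedge of circles: since $H_1$ is free abelian on the fundamental classes of the summands, $\varphi$ acts by an integer matrix whose $j$-th diagonal entry is $\deg(p_j\circ\varphi\circ\iota_j)$, so $L(\varphi)=-\sum_j\deg(p_j\circ\varphi\circ\iota_j)$. Specializing to $\varphi=\conj{f}_{\geq(K)}$ and $\varphi=\conj{f}_{>(K)}$, and observing that $p_i\circ\conj{f}_{\geq(K)}\circ\iota_i$ is precisely $\conj{f}_i$ on the $G$-quotient, the terms indexed by $i$ with $(H_i)>(K)$ cancel between the two Lefschetz numbers, leaving
\[
 L(\conj{f}_{\geq(K)})-L(\conj{f}_{>(K)})=\sum_{i:(H_i)=(K)}L(\conj{f}_i).
\]
Summing over all orbit types $(K)$ then gives the claimed formula $L_G(f)=\sum_i L(\conj{f}_i)\cdot(H_i)$.

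For the final identification, the preceding proposition already establishes the homotopy, commutativity and cofibration axioms for the fixed orbit index, and Proposition \ref{prop:homlefonspheres} yields exactly the same values for the homological equivariant Lefschetz number on wedges of equivariant $1$-spheres, so Corollary \ref{cor:lefschetzaxioms} forces the two invariants to agree on every self-map. The only delicate point is the cancellation step: one must check that all off-diagonal entries of the $H_1$-matrix of $\conj{f}_{\geq(K)}$ drop out of the trace and that the $\geq(K)$- and $>(K)$-contributions really cancel term-by-term along the diagonal. Everything else is bookkeeping about orbit types and the elementary fact that $({\quot G{H_i}}_+\wedge\setS^1)/G\cong\setS^1$.
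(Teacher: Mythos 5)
Your argument is correct and follows essentially the same route as the paper: both stratify $X$ by orbit type, identify $\quot{X_{\geq(K)}}G$ and $\quot{X_{>(K)}}G$ as wedges of ordinary circles, and reduce the difference $L(\conj{f}_{\geq(K)})-L(\conj{f}_{>(K)})$ to $\sum_{i:(H_i)=(K)}L(\conj{f}_i)$ before invoking Corollary \ref{cor:lefschetzaxioms} together with Proposition \ref{prop:homlefonspheres}. The only cosmetic difference is that you justify this cancellation by an explicit trace computation on $H_1$ of a wedge of circles, whereas the paper obtains the same identity from the cofibration and wedge-additivity properties of the ordinary reduced Lefschetz number applied to the pair $X_{>(K)}\subseteq X_{\geq(K)}$.
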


\begin{proof}
 For any closed subgroup $H$ of $G$, we have that $X_{>(H)}$ is an invariant subcomplex of $X_{\geq(H)}$ and their quotient is the wedge sum of those equivariant spheres 
 ${\quot G{H_i}}_+\wedge\setS^1$ such that $(H_i)=(H)$. Passing to the group quotient and using the cofibration axiom for the ordinary reduced Lefschetz number, this yields
\[
 L(\conj{f}_{\geq(H)})=L(\conj{f}_{>(H)})+L(\conj{f}_{(H)}).
\]
 Inserting into the definition of the reduced fixed orbit index, we obtain
\begin{eqnarray*}
 L_G(f)&=&\sum_{(H)}\left(L(\conj{f}_{\geq(H)})-L(\conj{f}_{>(H)})\right)\cdot(H)\\
       &=&\sum_{(H)}L(\conj{f}_{(H)})\cdot(H).
\end{eqnarray*}
Since the non-equivariant reduced Lefschetz number is additive on wedge sums, we have
\[
 L(\conj{f}_{(H)})=\sum_i L(\conj{f}_i),
\]
where the sum runs over all indices $i$ such that $(H_i)=(H)$ and $f_i$ is defined as above. This proves the claim.
\end{proof}

\section{The Lefschetz Fixed Orbit Theorem}
The theorem is easily deduced from the analytical definition of the Lefschetz number. A topological proof is also possible.

\begin{theorem}
 Let $X$ be a finite $G$-CW complex and $f:X\to X$ a $G$-map. Let $\pi_{(H)}:U_G\to\setZ$ be the projection to the $(H)$-component of $U_G$. If $(H)\neq G$ and $\pi_{(H)}(L_G(f))\neq0$,
 $f$ has a non-trivial fixed orbit of orbit type at least $(H)$. If the base point $*\in X$ is an isolated point, then the results holds for $(H)=(G)$ as well.
\end{theorem}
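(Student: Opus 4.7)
My plan is to argue by contrapositive, starting from the analytical description of the reduced fixed orbit index given in Section 6. For every closed subgroup $H$,
\[
 \pi_{(H)}(L_G(f)) = i(\conj{f}_{\geq(H)}, \quot{X_{\geq(H)}}G) - i(\conj{f}_{>(H)}, \quot{X_{>(H)}}G) - \delta_{(H),(G)},
\]
where the Nussbaum fixed point indices are well defined because Murayama's proposition shows the orbit spaces in question are ANRs, and $\delta_{(H),(G)}$ is the Kronecker symbol accounting for the base point of $X^G$.

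For $(H) \neq (G)$, I would suppose that $f$ admits no non-trivial fixed orbit in $X_{\geq(H)}$ and show that the two summands on the right agree. Since $(G) > (H)$, the base point $*$ lies in $X_{>(H)} \subseteq X_{\geq(H)}$, so by hypothesis $[*]$ is the unique fixed point of each of $\conj{f}_{\geq(H)}$ and $\conj{f}_{>(H)}$. The open stratum $\quot{X_{(H)}}G = \quot{X_{\geq(H)}}G \setminus \quot{X_{>(H)}}G$ is then fixed-point-free for $\conj{f}_{\geq(H)}$, and applying the additivity/excision of the Nussbaum index across the closed--open decomposition $\quot{X_{\geq(H)}}G = \quot{X_{>(H)}}G \cup \quot{X_{(H)}}G$ yields $i(\conj{f}_{\geq(H)}) = i(\conj{f}_{>(H)})$, and hence the desired vanishing.

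For $(H) = (G)$ under the additional hypothesis that $*$ is isolated in $X$, the argument is parallel but simpler: since $X_{>(G)} = \emptyset$ we have $\pi_{(G)}(L_G(f)) = i(\conj{f}|_{X^G}) - 1$, and the isolation of $*$ in $X$ forces it to be an isolated point of $X^G$, so that $X^G$ splits topologically as $\{*\} \sqcup (X^G \setminus \{*\})$ with each piece a $G$-ANR. On the second piece $\conj{f}|_{X^G}$ has no fixed points, so additivity and the value $1$ of the index at a singleton ANR give $i(\conj{f}|_{X^G}) = 1 + 0 = 1$, and the required vanishing follows.

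The main technical ingredient I would rely on is the additivity (equivalently, excision) property of the Nussbaum fixed point index for ANRs, together with its value $1$ at an isolated fixed point that is also an isolated point of the ambient ANR. The isolation hypothesis in the $(G)$ case is precisely what makes this applicable at the base point of $X^G$; the example alluded to at the end of Section 7 presumably shows that without it the local index of $\conj{f}|_{X^G}$ at $[*]$ can deviate from $1$, breaking the $(G)$ conclusion.
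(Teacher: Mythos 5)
Your argument is the contrapositive of the one the paper gives, built on the same formula for $\pi_{(H)}(L_G(f))$, and your treatment of the case $(H)=(G)$ is correct and matches the paper: there the decomposition $X^G=\{*\}\sqcup(X^G\setminus\{*\})$ really is a decomposition into relatively open pieces, because $*$ is assumed isolated, so additivity of the index applies and gives $i(\conj{f}|_{X^G})=1$.

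The case $(H)\neq(G)$ contains a genuine gap, and it sits exactly where you invoke ``additivity/excision across the closed--open decomposition''. Additivity of the Nussbaum index decomposes $\fix(f)$ into pieces that are \emph{open} in a fixed ambient space and sums the indices of $f$ on those pieces, still regarded as a map into that same ambient space; it never replaces the ambient space by a closed invariant subspace. The step you actually need, namely that $\fix(\conj{f}_{\geq(H)})=\{[*]\}\subseteq\quot{X_{>(H)}}G$ forces $i(\conj{f}_{\geq(H)},\quot{X_{\geq(H)}}G)=i(\conj{f}_{>(H)},\quot{X_{>(H)}}G)$, asserts that the local index of $[*]$ computed in $\quot{X_{\geq(H)}}G$ equals its local index computed in the closed subspace $\quot{X_{>(H)}}G$, and that is false in general because the local index depends on the ambient space. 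Concretely, take $G=\setZ_2$, $X=G_+\wedge\setS^1$ and $f([g],z)=([g],z^2)$. The only fixed orbit of $f$ is the base point, yet $i(\conj{f}_{\geq(e)},\setS^1)=1-2=-1$ while $i(\conj{f}_{>(e)},\{*\})=1$, so $\pi_{(e)}(L_G(f))=-2\neq0$, in agreement with the wedge-of-circles computation $L_G(f)=L(\conj{f})\cdot(e)=-2\cdot(e)$. To be fair, the paper's own proof asserts the identical equality without justification in its ``non-triviality'' step, so you have faithfully reproduced its argument; but the example shows that this step, and with it the word ``non-trivial'' in the conclusion for $(H)\neq(G)$, cannot be obtained this way (what does follow, since one of the two indices must be nonzero, is the existence of \emph{some} fixed orbit of type at least $(H)$).
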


\begin{proof}
 By definition of the analytical equivariant Lefschetz number,
 \[
  \pi_{(H)}(L_G(f))=i(\conj{f_{\geq(H)}}, \quot{X_{\geq(H)}}G)-i(\conj{f_{>(H)}}, \quot{X_{>(H)}}G)
 \]
 for $(H)\neq(G)$ and
\[
 \pi_{(G)}(L_G(f))=i(\conj{f_{\geq(G)}}, \quot{X_{(G)}}G)-1.
\]
If the former is non-zero, then either 
\[
i(\conj{f_{\geq(H)}}, \quot{X_{\geq(H)}}G)
\]
or 
\[
 i(\conj{f_{>(H)}}, \quot{X_{>(H)}}G)
\]
will be non-zero. Both facts imply that $\conj{f}$ has a fixed point in the set of points with orbit type at least $(H)$, and this implies existence of a fixed orbit of $f$ of type at 
least $(H)$. Non-triviality follows since if the base point were the only fixed orbit, we would have 
\[
i(\conj{f_{\geq(H)}}, \quot{X_{\geq(H)}}G)=i(\conj{f_{>(H)}}, \quot{X_{>(H)}}G).
\]
Under the additional assumption that the base point is isolated, non-vanishing of $i(\conj{f_{\geq(G)}}, \quot{X_{(G)}}G)-1$ implies that there must be at least one more fixed point of 
$f$ in $X_{(G)}$, apart from the base point, since $i(\conj{f_{\geq(G)}}, \quot{X_{(G)}}G)$ is different from $1$ and the local index of the base point is $1$.
\end{proof}

We give a simple example that shows that the term 'at least' can not be deleted in the statement of the theorem. Consider the $\setZ_2$-complex $\setS^1\subseteq\setC$, with the action 
given by reflection at the imaginary axis. The quotient space can be identified with $\incc{-1,1}$ in the obvious way. Any continuous map $\incc{-1,1}\to\incc{-1,1}$ fixing $-1$ and $1$ 
lifts to an equivariant self map of $\setS^1$. For example, we can consider the map
\[
 f_\mu:\incc{-1,1}\to\incc{-1,1},\;f_\mu(t)=t^3-\mu t^2+\mu
\]
for $\mu\in\incc{0,1}$. $f$ induces (not uniquely) an equivariant homotopy $F$ of $G$-self maps of $\setS^1$. By definition of the fixed orbit index, we have to calculate the Lefschetz 
number of $f_\mu$ in order to obtain the equivariant Lefschetz number of $F_\mu$. A simple calculation yields that $f_0$ has Lefschetz number $1$. The restriction to $\{-1,1\}$, which is
the quotient of the $\setZ_2$-fixed space os $\setS^1$, is the identity on a finite set, hence its Lefschetz number is $2$. We obtain that
\[
 i_G(F_\mu, \setS^1)=-(e)+2\cdot(\setZ_2),
\]
and therefore,
\[
 L_G(F_\mu)=-(e)+(\setZ_2).
\]
In particular, the $(e)$-component of $L_G(F_\mu)$ is non-zero. But $F_1$ has no fixed orbits apart from the trivial ones, in particular, it has no fixed orbit of type $(e)$.
\newpage

\end{document}